\theoremstyle{plain} 
\newtheorem{theorem}{Theorem}[section]
\newtheorem{lemma}[theorem]{Lemma}
\newtheorem{corollary}[theorem]{Corollary}
\theoremstyle{definition}
\newtheorem{definition}[theorem]{Definition}
\newtheorem{example}[theorem]{Example}
\newtheorem{remark}[theorem]{Remark}
\begin{document}

\title{On Geometric Implications}

\author{Amirhossein Akbar Tabatabai\\Institute of Mathematics, Czech Academy of Sciences}

\date{ }

\maketitle

\begin{abstract}
It is a well-known fact that although the poset of open sets of a topological space is a Heyting algebra, its Heyting implication is not necessarily stable under the inverse image of continuous functions and hence is not a geometric concept. This leaves us wondering if there is any stable family of implications that can be safely called geometric. In this paper, we will first recall the abstract notion of implication as a binary modality introduced in \cite{akbar2021implication}. Then, we will use a weaker version of categorical fibrations to define the geometricity of a category of pairs of spaces and implications over a given category of spaces. We will identify the greatest geometric category over the subcategories of open-irreducible (closed-irreducible) maps as a generalization of the usual injective open (closed) maps. Using this identification, we will then characterize all geometric categories over a given category $\mathcal{S}$, provided that $\mathcal{S}$ has some basic closure properties. Specially, we will show that there is no non-trivial geometric category over the full category of spaces. Finally, as the implications we identified are also interesting in their own right, we will spend some time to investigate their algebraic properties. We will first use a Yoneda-type argument to provide a representation theorem, making the implications a part of an adjunction-style pair. Then, we will use this result to provide a Kripke-style representation for any arbitrary implication.\\

\noindent \textbf{Keywords:} modal algebras, implications, geometricity, frame representation.

\end{abstract}

\section{Introduction}
It is well-known that the poset of open sets of a topological space is a (complete) Heyting algebra, an observation that provides the topological interpretation of the intuitionistic propositional logic $\mathsf{IPC}$. As the interpretation turns out to be complete, one may be tempted to consider $\mathsf{IPC}$ as the logic of spaces in the same way that the classical propositional logic is the logic of the discrete spaces or simply the unstructured sets. Despite the beauty of such a philosophical temptation, the Heyting implication, although present in any of these locales, is not preserved under the inverse image of continuous functions and hence can not be considered a truly geometric concept. (To see more about locales, see \cite{Pultr}).
To solve the issue, one may eliminate the Heyting implication from the language and restrict its expressive power to the so-called coherent fragment. It is also possible to be more faithful to the nature of space (and hence less to the elementary nature of the language) to allow the infinitary disjunctions and achieve the so-called geometric logic \cite{StoneSpaces,TopViaLogic}. In many contexts \cite{vickers2014continuity}, these logics are the natural logical systems to consider and although they seem to be weak at first glance, they prove their sophistication through their natural role in the real practice. For instance, geometric theories play a crucial role in topos theory, where they characterize all Grothendieck topoi as the mathematical universes freely constructed from the free model of geometric theories \cite{maclane2012sheaves,Johnstone1,Johnstone2}. Even the finitary coherent fragment is more powerful than it appears as any classical theory has a coherent conservative extension \cite{dyckhoff2015geometrisation}. 

Having said that, the implication is not what one wants to ignore permanently. Philosophically speaking, implication is the machinery to internalize the meta-relation of the ``\textit{entailment order between the propositions $A$ and $B$}" into a ``\textit{proposition $A \to B$}" to empower the language to talk about its own entailment behavior. The logical realm is full of different instances of implications from the more philosophically motivated conditionals \cite{NuteCross} and the weak implications \cite{Vi2,visser1981propositional,Ru} addressing the impredicativity problem of the Heyting implication to the more mathematically motivated implications such as the ones appeared in provability logic \cite{visser1981propositional} and preservability logic \cite{Iem1,Iem2,LitViss}.

Opening the horizon to the alternative implications, one may wonder if there is any sort of geometric implication, powerful enough to internalize some parts of the structures, on the one hand, and be geometric, on the other. To address such a problem formally, we must first be precise about what we mean by an implication.
Reading the internalization process algebraically, implications are some binary operations over the posets where they internalize the order of the poset, mapping the predicate $a \leq b$ into the element $a \to b$. Naturally, there are many structures and properties to internalize. For instance, the fact that the order is reflexive, i.e., $a \leq a$ internalizes to $a \to a=1$, its transitivity internalizes to $(a \to b) \wedge (b \to c) \leq (a \to c)$ and the existence of the binary meets to $a \to (b \wedge c)=(a \to b) \wedge (a \to c)$.
To provide a definition for a general notion of implication, we must choose the minimum level of internalization to enforce, and we think that the natural minimum property of an order to internalize is simply the fact that it is an order, i.e., that it is reflexive and transitive, see \cite{akbar2021implication}.

\begin{definition}\label{DefImplication} 
Let $\mathcal{A}=(A, \leq, \wedge, \vee, 1, 0)$ be a bounded distributive lattice. A binary operator $\to$ over $\mathcal{A}$, decreasing in its first argument and increasing in its second is called an \emph{implication} over $\mathcal{A}$ if:
\begin{description}
\item[$(i)$]
(\emph{internal reflexivity}) $a \to a=1$, for any $a \in \mathcal{A}$,
\item[$(ii)$]
(\emph{internal transitivity}) $(a \to b) \wedge (b \to c) \leq a \to c$, for any $a, b, c \in \mathcal{A}$.
\end{description}
An implication is called \emph{meet internalizing} if $a \to (b \wedge c)=(a \to b) \wedge (a \to c)$ and \emph{join internalizing} if $(a \vee b) \to c=(a \to c) \wedge (b \to c)$, for any $a, b, c \in \mathcal{A}$.
For any implication, $\neg a$ is an abbreviation for $a \to 0$. If $\to$ is an implication over $\mathcal{A}$, the pair $(\mathcal{A}, \to)$ is called a \emph{strong algebra}. If $\mathcal{A}=\mathcal{O}(X)$, for some space $X$, then the pair $(X, \to)$ is called a \emph{strong space}.
By a \emph{strong algebra map} $f: (\mathcal{A}, \to_{\mathcal{A}}) \to (\mathcal{B}, \to_{\mathcal{B}})$, we mean a bounded lattice map preserving the implication, i.e., $f(a \to_{\mathcal{A}} b)=f(a) \to_{\mathcal{B}} f(b)$, for any $a, b \in \mathcal{A}$. A \emph{strong space map} is a continuous map between spaces such that its inverse image preserves the implication.
\end{definition}

\begin{remark}\label{OtherDefImp} In \cite{akbar2021implication}, it is shown that implications can be equivalently defined as the binary operations over $\mathcal{A}$ satisfying the conditions: 
\begin{description}
\item[$(i')$]
If $a \leq b$ then $a \to b=1$, for any $a, b \in \mathcal{A}$
\item[$(ii)$]
$(a \to b) \wedge (b \to c) \leq a \to c$, for any $a, b, c \in \mathcal{A}$.
\end{description}
\end{remark}

To see a more detailed discussion to motivate the aforementioned definition, the reader may consult \cite{akbar2021implication}. However, it is illuminating to think of an implication as a special case of a general setting in which a category internalizes its hom structure, i.e., its identity and its composition. The general formalization for such a generalized function space is introduced in \cite{DefArrow}, where it is called an arrow. The categorical formalization of the arrows that act as the generalized internal hom functors can be found in \cite{CatArrow}. In this broader story, our implications are nothing but arrows enriched over the category $\{0 \leq 1\}$ rather than $\mathbf{Set}$ and hence they are just the propositional shadows of the more structured arrows.

\begin{example} \label{ExampleOfImplications}
Over any bounded distributive lattice $\mathcal{A}$, there is a \emph{trivial implication} defined by $a \to_t b=1$, for any $a, b \in \mathcal{A}$. The Boolean and the Heyting implications are also implications. To construct a new implication from the old, assume that $(\mathcal{B}, \to_{\mathcal{B}})$ is a strong algebra, $f: \mathcal{A} \to \mathcal{B}$ is an order-preserving map and $g: \mathcal{B} \to \mathcal{A}$ is a finite meet preserving map. Then, it is easy to check that the operator $a \to_{\mathcal{A}} b=g(f(a) \to_{\mathcal{B}} f(b))$ is an implication over $\mathcal{A}$. It is also possible to show that any implication is constructible from the Heyting implication in this way, expanding the base lattice to a locale, see \cite{akbar2021implication}.
\end{example}

Having a definition for implication, it is now reasonable to search for the geometric implications, i.e., the family of implications over the locales of opens of spaces stable under the inverse image of \emph{all} continuous functions. We will show that there is only one such family, namely the one with the trivial implications. To prove that surprising result, we employ a weaker version of the categorical fibrations to develop the relative notion of geometricity of a category $\mathcal{C}$ of strong spaces over a category $\mathcal{S}$ of spaces. Here, geometricity simply means that the implications of the strong spaces in $\mathcal{C}$ are stable under the inverse image of the maps in $\mathcal{S}$. We will then continue by identifying the greatest geometric categories over the subcategories of the open- and closed-irreducible maps. These two families of maps can be considered as the generalizations of the injective open (closed) maps. The implications stable under the open-irreducible maps are the ones for which $c \wedge a \leq b$ implies $c \leq a \to b$. These implications behave similarly to a well-known family of implications called the basic implication introduced in \cite{visser1981propositional} in provability logic and later in  \cite{Ru} for philosophical reasons. For the closed-irreducible maps, the implications are the ones for which $a \leq b \vee c$ implies $(a \to b) \vee c=1$. We will show that having these two properties forces the implications to behave similarly to the Boolean implications as they satisfy the equation $a \to b=\neg a \vee b$. Using these implications and their relationship with the geometricity for the open- and closed-irreducible maps,
we will then identify all the geometric categories over a given category $\mathcal{S}$, provided that $\mathcal{S}$ has some basic closure properties.

Completing the characterization of the geometric categories, as the implications we identified are also interesting in their own right, we will spend the last section to provide a representation theorem for them. We will first use a Yoneda-type argument, making the implications a part of an adjunction-style pair. Then, we will use this result to represent an arbitrary implication as the implication of a topological version of a combination of an intuitionistic Kripke and a neighbourhood frame.

\section{Preliminaries}
In this section, we will recall some basic notions and their corresponding theorems we need throughout the paper.
Let $\mathcal{P}=(P, \leq)$ be a poset.
A subset $S \subseteq P$ is called an \emph{upset} if for any $x, y \in P$, if $x \in S$ and $x \leq y$ then $y \in S$. The \emph{downsets} are defined dually. The set of all upsets of $(P, \leq)$ is denoted by $U(P, \leq)$.
For any $S \subseteq P$, the greatest lower bound of $S$ (resp. the least upper bound of $S$), if it exists, is called the \emph{meet} (resp. \emph{join}) of the elements of $S$ and is denoted by $\bigwedge S$ (resp. $\bigvee S$). If $S=\{a, b\}$, the meet $\bigwedge S$ and the join $\bigvee S$ are denoted by $a \wedge b$ and $a \vee b$, respectively. Moreover, $\bigwedge \varnothing$ and $\bigvee \varnothing$, i.e., the greatest and the least elements of $P$, if exist, are denoted by $1$ and $0$, respectively.
A poset is called a \emph{bounded lattice}, if for any finite subset $S \subseteq P$, both $\bigwedge S$ and $\bigvee S$ exist and it is called \emph{complete} if for any set $S \subseteq P$, the meet $\bigwedge S$ exists.  A bounded lattice is called \emph{distributive}, if $a \wedge (b \vee c)=(a \wedge b) \vee (a \wedge c)$,
for any $a, b, c \in P$. It is called a \emph{locale}, if for any $S \subseteq P$, the join $\bigvee S$ exists and $
a \wedge \bigvee_{b \in S} b =\bigvee_{b \in S} (a \wedge b)$, for any $a \in P$ and $S \subseteq P$. By the \emph{Heyting implication} over a bounded lattice $\mathcal{A}$, we mean the binary operation $\Rightarrow$ over $\mathcal{A}$ such that $a \wedge b \leq c$ iff $a \leq b \Rightarrow c$, for any $a, b, c \in \mathcal{A}$.
A bounded lattice $\mathcal{H}$ is called a \emph{Heyting algebra} if it has the Heyting implication. A bounded lattice $\mathcal{B}$ is called a \emph{Boolean algebra} if all the elements of $\mathcal{B}$ have a complement, i.e., for any $a \in \mathcal{B}$, there is $b \in \mathcal{B}$ such that $a \vee b=1$ and $a \wedge b=0$. 
A subset of a bounded lattice is called a \emph{filter}, if it is an upset and closed under all finite meets. A filter $F$ is called \emph{prime} if $0 \notin F$ and $a \vee b \in F$ implies either $a \in F$ or $b \in F$. The set of all prime filters of a lattice $\mathcal{A}$ is denoted by $\mathcal{F}_p(\mathcal{A})$. A subset of $\mathcal{A}$ is called an \emph{ideal}, if it is a downset and closed under all finite joins. The following theorem is a useful tool when working with bounded distributive lattices:
\begin{theorem}\label{PrimeFilterTheorem}\cite{davey2002introduction,Esakia}(Prime filter theorem)
Let $\mathcal{A}$ be a bounded distributive lattice, $F$ be a filter and $I$ be an ideal such that $F \cap I=\varnothing$. Then, there exists a prime filter $P$ such that $F \subseteq P$ and $P \cap I=\varnothing$.
\end{theorem}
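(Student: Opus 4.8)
The plan is to apply Zorn's lemma to the collection $\mathcal{G}$ of all filters $G$ of $\mathcal{A}$ with $F \subseteq G$ and $G \cap I = \varnothing$, partially ordered by inclusion. First I would note that $\mathcal{G} \neq \varnothing$ since $F \in \mathcal{G}$. Then, given a chain $\{G_\alpha\}_\alpha$ in $\mathcal{G}$, I would check that $\bigcup_\alpha G_\alpha$ is again a member of $\mathcal{G}$: it is clearly an upset, and it is closed under binary meets because any two of its elements already lie in a common $G_\alpha$ (the chain being directed); it is disjoint from $I$ because each $G_\alpha$ is, and it contains $F$. Hence it is an upper bound of the chain inside $\mathcal{G}$, and Zorn's lemma yields a maximal element $P \in \mathcal{G}$.

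Next I would verify that $P$ is a prime filter. Since every ideal contains $0 = \bigvee \varnothing$, we have $0 \in I$, hence $0 \notin P$. It remains to show that $a \vee b \in P$ forces $a \in P$ or $b \in P$. Arguing by contradiction, suppose $a \notin P$ and $b \notin P$, and consider the filter $P_a$ generated by $P \cup \{a\}$, explicitly $P_a = \{x \in \mathcal{A} : p \wedge a \leq x \text{ for some } p \in P\}$, and likewise $P_b$. Both $P_a$ and $P_b$ properly contain $P$ and still contain $F$, so by maximality of $P$ neither can belong to $\mathcal{G}$; therefore $P_a \cap I \neq \varnothing$ and $P_b \cap I \neq \varnothing$. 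Pick $p_1, p_2 \in P$ and $i_1, i_2 \in I$ with $p_1 \wedge a \leq i_1$ and $p_2 \wedge b \leq i_2$, and set $p = p_1 \wedge p_2 \in P$.

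At this point the distributivity of $\mathcal{A}$ does the decisive work: $p \wedge (a \vee b) = (p \wedge a) \vee (p \wedge b) \leq i_1 \vee i_2$, and $i_1 \vee i_2 \in I$ since $I$ is closed under finite joins; as $I$ is a downset, $p \wedge (a \vee b) \in I$. On the other hand $p \in P$ and $a \vee b \in P$, so $p \wedge (a \vee b) \in P$, contradicting $P \cap I = \varnothing$. Thus $P$ is prime, and by construction $F \subseteq P$ and $P \cap I = \varnothing$, which completes the argument. The only place any genuine care is needed is this primality step — specifically, correctly describing the filter obtained by adjoining a single element and carrying out the one-line distributive computation that produces the contradicting element of $P \cap I$; the rest is the routine Zorn's-lemma packaging.
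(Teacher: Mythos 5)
Your proof is correct and complete: the Zorn's-lemma extraction of a maximal filter disjoint from $I$, followed by the distributivity computation $p \wedge (a \vee b) = (p \wedge a) \vee (p \wedge b) \leq i_1 \vee i_2$ to establish primality, is exactly the standard argument given in the cited references (the paper itself states Theorem \ref{PrimeFilterTheorem} as known background and offers no proof). The only point worth making explicit is that when $P_a \notin \mathcal{G}$ by maximality, the failing condition must be disjointness from $I$ rather than one of the filter axioms or the inclusion of $F$, which you have implicitly but correctly used.
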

Let $(P, \leq_P)$ and $(Q, \leq_Q)$ be two posets and $f: P \to Q$ be a function. It is called an \emph{order-preserving map}, if it preserves the order, meaning $f(a) \leq_Q f(b)$, for any $a \leq_P b$. An order-preserving map is called an \emph{order embedding} or simply an embedding, if for any $a, b \in P$, the inequality $f(a) \leq_Q f(b)$ implies $a \leq_P b$. An order-preserving map between two bounded lattices (locales) is called a \emph{bounded lattice map} (\emph{locale map}), if it preserves all finite meets and finite joins (arbitrary joins).
For two order-preserving maps $f: P \to Q$ and $g: Q \to P$, the pair $(f, g)$ is called an \emph{adjunction}, denoted by \emph{$f \dashv g$}, if $f(a) \leq_Q b$ is equivalent to $a \leq_P g(b)$, for any $a \in P$ and $b \in Q$. If $f \dashv g$,
the map $f$ is called the \emph{left adjoint} of $g$ and $g$ is called the \emph{right adjoint} of $f$.

\begin{theorem} \cite{Bor1} (Adjoint functor theorem for posets) \label{AdjointFunctorTheorem}
Let $(P, \leq_P)$ be a complete poset and $(Q, \leq_Q)$ be a poset. Then, an order-preserving map $f: (P, \leq_P) \to (Q, \leq_Q)$ has a right (left) adjoint iff it preserves all joins (meets). 
\end{theorem}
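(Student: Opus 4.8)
The plan is to prove the ``right adjoint'' half in full and then obtain the ``left adjoint'' half by dualization. This is legitimate because a complete poset in the sense used here has all meets, hence also all joins (the join of a set being the meet of its set of upper bounds), so that $(P,\leq_P)^{\mathrm{op}}$ is again complete, and a left adjoint of $f:(P,\leq_P)\to(Q,\leq_Q)$ is precisely a right adjoint of $f$ viewed as a map $(P,\leq_P)^{\mathrm{op}}\to(Q,\leq_Q)^{\mathrm{op}}$, while joins in $P^{\mathrm{op}}$ are meets in $P$.

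For the forward direction, suppose $f\dashv g$ and let $S\subseteq P$. Since $f$ is order-preserving, $f(\bigvee S)$ is an upper bound of $f[S]$. If $b\in Q$ is any upper bound of $f[S]$, then $f(s)\leq_Q b$ for all $s\in S$, so $s\leq_P g(b)$ for all $s\in S$ by the adjunction, hence $\bigvee S\leq_P g(b)$, and applying the adjunction once more yields $f(\bigvee S)\leq_Q b$. Thus $f(\bigvee S)$ is the least upper bound of $f[S]$; in particular $\bigvee f[S]$ exists in $Q$ and equals $f(\bigvee S)$, so $f$ preserves all joins.

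For the converse, assume $f$ preserves all joins and define $g:Q\to P$ by $g(b)=\bigvee\{a\in P : f(a)\leq_Q b\}$, which is well-defined since $P$ is complete. Writing $S_b=\{a\in P : f(a)\leq_Q b\}$, the hypothesis gives $f(g(b))=f(\bigvee S_b)=\bigvee f[S_b]\leq_Q b$, as every element of $f[S_b]$ is $\leq_Q b$. Now if $f(a)\leq_Q b$ then $a\in S_b$, so $a\leq_P g(b)$; conversely, if $a\leq_P g(b)$ then $f(a)\leq_Q f(g(b))\leq_Q b$ by monotonicity of $f$ and the inequality just established. Hence $f(a)\leq_Q b$ iff $a\leq_P g(b)$, i.e.\ $f\dashv g$. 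Monotonicity of $g$ is automatic, or may be read off directly from $b\leq_Q b'\Rightarrow S_b\subseteq S_{b'}$.

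There is no genuinely hard step here; the only points requiring care are that in the forward direction one must not presuppose the existence of $\bigvee f[S]$ in $Q$, as it is produced as a by-product of the argument, and in the converse direction it is exactly the completeness of $P$ that licenses the definition of $g$. The dualization handling the meet/left-adjoint case then needs only the observation, noted above, that this notion of completeness behaves well under passage to the opposite poset.
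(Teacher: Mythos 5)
The paper states this result as a cited preliminary (from \cite{Bor1}) and gives no proof of its own, so there is nothing to compare against; your argument is the standard textbook proof and is correct. You handle the two delicate points properly: in the forward direction you do not presuppose that $\bigvee f[S]$ exists in $Q$ but derive it, and in the converse direction you use the completeness of $P$ (all meets, hence all joins) exactly where it is needed to define $g(b)=\bigvee\{a\in P\mid f(a)\leq_Q b\}$; the dualization to the left-adjoint/meet case is also sound.
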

Let $X$ be a topological space. We denote the locale of its open subsets by $\mathcal{O}(X)$. A topological space is called $T_0$, if for any two different points $x, y \in X$, there is an open set which contains one of these points and not the other. It is called \emph{Hausdorff} if for any two distinct points $x, y \in X$, there are opens $U, V \in \mathcal{O}(X)$ such that $x \in U$, $y \in V$ and $U \cap V=\varnothing$.
A pair $(X, \leq)$ of a topological space and a partial order is called a \emph{Priestley space} if $X$ is compact and for any $x, y \in X$, if $x \nleq y$, there exists a clopen upset $U$ such that $x \in U$ and $y \notin U$. For any bounded distributive lattice $\mathcal{A}$, the pair $(\mathcal{F}_p(\mathcal{A}), \subseteq)$ is a Priestley space, where $\mathcal{F}_p(\mathcal{A})$ is the set of all prime filters of $\mathcal{A}$ and the topology on $\mathcal{F}_p(\mathcal{A})$ is defined by the basis of the opens in the form $\{P \in \mathcal{F}_p(\mathcal{A}) \mid a \in P \; \text{and} \; b \notin P \}$, for any $a, b \in \mathcal{A}$. Denoting $\{P \in \mathcal{F}_p(\mathcal{A}) \mid a \in P \}$ by $i(a)$, it is known that any clopen upset in this Priestley space equals to $i(a)$, for some $a \in \mathcal{A}$ and any clopen set is in form $\bigcup_{r=1}^n [i(a_r) \cap i(b_r)^c]$, for some finite sets $\{a_1, \ldots, a_n\}, \{b_1, \ldots, b_n\} \subseteq \mathcal{A}$. For a comprehensive explanation, see \cite{davey2002introduction}.

\section{Open, Closed and Weakly Boolean Implications}
In this section, we first introduce the three families of open, closed, and weakly Boolean implications, some of their natural examples and a method to construct the new ones from the old. Then, in Subsection \ref{SubsectionWBIOverSpaces}, we provide a characterization for the weakly Boolean implications defined on the locale of opens of a topological space needed in the next section. Finally, in Subsection \ref{OpenClosedIrreducible}, we introduce two families of continuous maps as the generalized versions of the injective open and closed maps. These families provide the real motivation to consider the above-mentioned families of implications as the classes of the open and closed implications are the greatest classes of implications that are stable under the inverse image of the open- and closed-irreducible maps, respectively. In other words, the conditions we put on any of these two families of implications are necessary if we want them to be stable under the corresponding classes of continuous maps.
\begin{definition}\label{DefOpenClosedImplication}
An implication $\to$ over a bounded distributive lattice $\mathcal{A}$ is called \emph{open} if $a \wedge b \leq c$ implies $a \leq b \to c$, for any $a, b, c \in \mathcal{A}$. It is called \emph{closed} if $a \leq b \vee c$ implies $(a \to b) \vee c=1$, for any $a, b, c \in \mathcal{A}$. An implication is called a \emph{weakly Boolean} implication (WBI, for short), if it is both open and closed. A strong algebra $(\mathcal{A}, \to)$ is called open, closed, or weakly Boolean, if its implication is. 
\end{definition}

\begin{remark}\label{LogicalJustification}
As mentioned before, the real motivation to investigate the three families of implications introduced in Definition \ref{DefOpenClosedImplication} is geometric and will be covered in Subsection \ref{OpenClosedIrreducible}. However, it is also worth providing a logical motivation for them in this remark. For that purpose, first, consider the following sequent-style rule for the classical implication in the usual calculus $\mathbf{LK}$ for classical logic:
\begin{center}
	\begin{tabular}{c c}
	    \AxiomC{$\Gamma, A \Rightarrow B, \Delta$}
		\UnaryInfC{$\Gamma \Rightarrow A \to B, \Delta$}
		\DisplayProof
	\end{tabular}
\end{center}	
The condition for the open implications is half of the adjunction property of Heyting implications and is reminiscent of the above rule, except that in the rule $\Delta$ is considered as empty. The condition for the closed implications is also reminiscent of the above rule. However, this time the restriction changes to the emptiness of $\Gamma$. It is easy to see that an implication is weakly Boolean iff it admits the full rule, i.e., if $c \wedge a \leq b \vee d$ implies $c \leq (a \to b) \vee  d$, for any $a, b, c, d \in \mathcal{A}
$. 
\end{remark}
It is practically helpful to simplify the definition of the open and closed implications from an implication between two inequalities to just one inequality. It is also theoretically important as it shows that the each of these families form a variety. Here is the simplification. 

\begin{lemma}\label{PropertiesOfOpenAndClosed}
Let $(\mathcal{A}, \to)$ be a strong algebra. Then:
\begin{description}
    \item[$(i)$]
$\to$ is open iff $a \leq b \to a \wedge b$, for any $a, b \in \mathcal{A}$. Specially, $a \leq b \to a$ and $a \wedge \neg 1 = a \wedge \neg a$, for any $a, b \in \mathcal{A}$.
    \item[$(ii)$]
$\to$ is closed iff $(a \vee b \to a) \vee b=1$, for any $a, b \in \mathcal{A}$. Specially, $b \vee \neg b=1$, for any $b \in \mathcal{A}$.
  \item[$(iii)$]
  If $\to$ is closed, then $c \wedge a \leq b$ implies $c \leq \neg a \vee b$, for any $a, b, c \in \mathcal{A}$.
\end{description}
\end{lemma}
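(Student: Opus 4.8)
The plan is to establish each biconditional by the "easy direction plus specialization" method. For $(i)$, the forward direction is immediate: assuming $\to$ is open, apply the defining property $x \wedge y \le z \implies x \le y \to z$ with $x = a$, $y = b$, $z = a \wedge b$, using the trivial inequality $a \wedge b \le a \wedge b$; this yields $a \le b \to (a\wedge b)$. For the converse, suppose $a \le b \to (a \wedge b)$ holds for all $a,b$ and assume $a \wedge b \le c$. Since $\to$ is increasing in its second argument, $b \to (a \wedge b) \le b \to c$, so $a \le b \to (a\wedge b) \le b\to c$, establishing openness. The two special cases follow by substitution: taking $b = 1$ (or more simply noting $a \wedge 1 = a \le 1$, hence $a \le 1 \to a$... ) — actually for $a \le b \to a$ one reasons that $a \wedge b \le a$ always, so openness gives $a \le b \to a$ directly; and for $a \wedge \neg 1 = a \wedge \neg a$, one uses $a \le 1 \to a$ together with monotonicity of $\to$ in the first argument (it is decreasing there) to get $1 \to 0 \le a \to 0$, i.e. $\neg 1 \le \neg a$, hence $a \wedge \neg 1 \le a \wedge \neg a$; the reverse inequality $a \wedge \neg a \le a \wedge \neg 1$ needs $a \wedge \neg a \le \neg 1$, which should come from openness applied appropriately (from $a \wedge \neg a \le 0 \le $ anything, combined with the fact that $a\wedge \neg a\wedge 1 \le 0$ gives, by openness, $a \wedge \neg a \le 1 \to 0 = \neg 1$).

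For $(ii)$, by the dual pattern: if $\to$ is closed, apply $x \le y \vee z \implies (x \to y) \vee z = 1$ with $x = a \vee b$, $y = a$, $z = b$, using $a \vee b \le a \vee b$, to get $(a \vee b \to a) \vee b = 1$. Conversely, assume $(a \vee b \to a) \vee b = 1$ for all $a,b$ and suppose $a \le b \vee c$. Then $a \vee (b \vee c) = b \vee c$, so I want to massage the hypothesis: apply the assumed identity with the roles chosen as $b \vee c$ in place of $b$ and $b$ in place of $a$... more carefully, from $a \le b \vee c$ we have $a \vee b \le b \vee c$, and the instance of the identity $( (b\vee c) \to b ) \vee c$... this requires a small amount of care to land exactly on $(a \to b) \vee c = 1$; the key levers are that $\to$ is decreasing in the first argument (so a larger antecedent gives a smaller implication, letting us pass from $a \vee b$ or $b \vee c$ down appropriately) and distributivity. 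The special case $b \vee \neg b = 1$ comes from taking $a = b$ and the other argument $0$: from $b \le b \vee 0$ closedness gives $(b \to 0) \vee 0 = \neg b \vee$ ... wait, one rather uses $b \vee 0 \le b \vee 0$ hmm; cleanly, apply the general closed property to $b \le b \vee 0$ — but that is vacuous — instead take the instance with antecedent $b$ and disjuncts $0$ and $b$: since $b \le 0 \vee b$, closedness yields $(b \to 0) \vee b = 1$, which is $\neg b \vee b = 1$.

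For $(iii)$, assume $\to$ is closed and $c \wedge a \le b$. The goal $c \le \neg a \vee b$ should follow by combining closedness with distributivity. One natural route: from $c \wedge a \le b$ one gets $c \le (c \wedge a) \vee \neg a$ would suffice if we knew $c \le a' \vee \neg a$ for a suitable $a'$; note $c = c \wedge (a \vee \neg a) = (c \wedge a) \vee (c \wedge \neg a) \le (c\wedge a)\vee \neg a \le b \vee \neg a$, where the first equality uses $a \vee \neg a = 1$ from part $(ii)$ (since closed implies that special case) and distributivity. So $(iii)$ is essentially a corollary of $(ii)$ plus distributivity, with no further appeal to closedness needed beyond $a \vee \neg a = 1$.

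The main obstacle I anticipate is purely bookkeeping: in the converse directions of $(i)$ and $(ii)$ and in deriving the "specially" clauses, one must repeatedly exploit that $\to$ is decreasing in the first coordinate and increasing in the second, and choose the substitutions so that the one-variable inequality specializes to exactly the two-inequality form in Definition~\ref{DefOpenClosedImplication}; getting the direction of each monotonicity step right (and invoking distributivity only where it is actually available) is where a careless argument would slip. None of the steps requires the prime filter theorem or any deep tool — it is all equational manipulation in a bounded distributive lattice equipped with a monotone-in-the-right-way binary operation.
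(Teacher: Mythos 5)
Your overall strategy matches the paper's: prove the easy direction of each biconditional by instantiating the defining property at $a \wedge b \leq a \wedge b$ (resp.\ $a \vee b \leq a \vee b$), get the converse from monotonicity of $\to$ in the second (resp.\ first) argument, and derive $(iii)$ from $a \vee \neg a = 1$ plus distributivity. Parts $(ii)$ and $(iii)$ go through once the bookkeeping you flag is carried out: for the converse of $(ii)$, the instance $((b \vee c) \to b) \vee c = 1$ together with $(b \vee c) \to b \leq a \to b$ (antitonicity in the first argument, using $a \leq b \vee c$) lands exactly on $(a \to b) \vee c = 1$, and your computation for $(iii)$ is essentially the paper's.

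There is, however, a genuine gap in your argument for $a \wedge \neg a \leq a \wedge \neg 1$ in part $(i)$. You propose to obtain $a \wedge \neg a \leq 1 \to 0 = \neg 1$ by applying openness to the premise $a \wedge \neg a \wedge 1 \leq 0$, but that premise is false for a general open implication: openness gives only the direction $x \wedge y \leq z \Rightarrow x \leq y \to z$ of the Heyting adjunction, never its converse, so nothing forces $a \wedge (a \to 0) \leq 0$. The trivial implication $a \to_t b = 1$ is open, yet $a \wedge \neg_t a = a \wedge 1 = a$, which is not below $0$. The correct route, and the one the paper takes, is internal transitivity (condition $(ii)$ of Definition~\ref{DefImplication}): from $a \leq 1 \to a$ (which openness yields via $a \wedge 1 \leq a$) one gets $a \wedge \neg a \leq (1 \to a) \wedge (a \to 0) \leq 1 \to 0 = \neg 1$. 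This is the one step in the lemma where the internal-transitivity axiom, rather than monotonicity or distributivity, is indispensable, and your proposal misses it.
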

\begin{proof}
For $(i)$, if an implication is open, as $a \wedge b \leq a \wedge b$, we have $a \leq b \to a \wedge b$. Conversely, if $a \wedge b \leq c$, we have $a \leq b \to (a \wedge b) \leq b \to c$. 
For the special cases, notice that by $a \wedge b \leq a$, we reach $a \leq b \to a$. To prove $a \wedge \neg a=a \wedge \neg 1$, since $a \leq 1$, we reach $\neg 1 \leq \neg a$ which itself implies $a \wedge \neg 1 \leq a \wedge \neg a$. For the converse, note that as $a \leq 1 \to a$, we have $a \wedge \neg a \leq (1 \to a) \wedge (a \to 0) \leq 1 \to 0=\neg 1$. Hence, $a \wedge \neg a \leq a \wedge \neg 1$.
For $(ii)$, its first part is similar to that of $(i)$. For the special case, by setting $a=0$, we have $b \vee \neg b=1$. For $(iii)$, if $c \wedge a \leq b$, then 
$c \leq (c \vee \neg a) \wedge (a \vee \neg a)=(c \wedge a) \vee \neg a \leq \neg a \vee b$.
\end{proof}

\begin{example}\label{BasicExample}
The trivial implication $\to_t$ defined as the constant function $1$ is both open and closed. The Heyting implication is clearly open, while it is not necessarily closed. In fact, it is closed iff it is Boolean. 
To have a closed implication that is not open, we refer the reader to Example \ref{ExamOfSatndardFrame}, where a general machinery to construct the open and closed implications are provided. Here, however, we provide an example of an implication that is neither open nor closed. Let $\mathcal{A}$ be $\mathcal{O}(\mathbb{R})$ and $\Rightarrow$ be its Heyting implication. Now, putting $f(U)=U+1=\{x+1 \mid x \in U\}$ and $g=id_{\mathcal{A}}$ in Example \ref{ExampleOfImplications}, the operation $U \to V=[(U+1) \Rightarrow (V+1)]$ is an implication. However, it is not open as $[\mathbb{R} \to (0, 1)]= [\mathbb{R} \Rightarrow (1, 2)]=(1, 2) \nsupseteq (0, 1)$ and it is not closed as $(-\infty, 0) \cup [(-\infty, 0) \to \varnothing]=(-\infty, 0) \cup [(-\infty, 1) \Rightarrow \varnothing]=(-\infty, 0) \cup (1, \infty) \neq \mathbb{R}$. Finally, to provide a family of implications that are both open and closed, over any Boolean algebra $\mathcal{B}$ define $a \to b=\bar{a} \vee b \vee m$, where $\bar{a}$ is the complement of $a$ and $m \in \mathcal{B}$ is a fixed element. It is easy to see that $\to$ is a WBI.
\end{example}

In the following definition, we provide a combination of an intuitionistic Kripke frame and a neighbourhood frame, serving as an order-theoretic and hence concrete machinery to construct different families of implications. Later, in Section \ref{SecRepresentation}, we will see that these frames are powerful enough to represent all possible implications.

\begin{definition}
A \emph{Kripke-Neighbourhood frame} (KN-frame, for short) is a tuple $\mathcal{K}=(K, \leq, R, B, N)$ of a poset $(K, \leq)$, a binary relation $R$ on $K$, a set $B \subseteq P(X)$ and a map $N: X \to P(U(K, \leq, B))$, where $U(K, \leq, B)$ is the set of all upsets in $B$, such that:
\begin{itemize}
\item[$\bullet$]
$R$ is compatible with the order, i.e., if $x \leq y$ and $(y, z) \in R$, then $(x, z) \in R$,
\item[$\bullet$]
for any $x \in X$ and any upsets $U, V \in B$, if $U \subseteq V$ and $U \in N(x)$ then $V \in N(x)$,
\item[$\bullet$]
$B$ is closed under finite union (including $K$ as the nullary union), complement and the operation $\lozenge_R$ defined by $\lozenge_R(U)=\{x \in X \mid \exists y \in U, (x, y) \in R\}$,
\item[$\bullet$]
$j(U)=\{x \in X \mid U \in N(x)\}$ is in $B$, for any upset $U \in B$.
\end{itemize}
A KN-frame is called \emph{full} if $B=P(X)$. It is called \emph{standard} if $B=P(X)$ and $N(k)=\{U \in U(K, \leq) \mid k \in U\}$. We denote a standard KN-frame by $(K, \leq, R)$ as its only non-trivial ingredients. A KN-frame is called \emph{open} when for any $x, y \in K$ and any upsets $U, V \in B$, if $x \in U$, $(x, y) 
\in R$ and $V \in N(y)$, then $U \cap V \in N(y)$. It is called \emph{closed} when for any $x, y \in K$ and any upsets $U, V \in B$, if $x \notin U$, $(x, y) 
\in R$ and $U \cup V \in N(y)$, then $V \in N(y)$.
\end{definition}

\begin{example}\label{ExamOfFrame}
Let $\mathcal{K}=(K, \leq, R, B, N)$ be a KN-frame. Then, the bounded distributive lattice $U(K, \leq, B)$ of the upsets in $B$ is closed under the operation $U \to_{\mathcal{K}} V=\{x \in K \mid \forall y \in K \, [(x, y) \in R \; \text{and} \; U \in N(y) \; \text{then} \; V \in N(y)]\}$ and the pair $\mathfrak{A}(\mathcal{K})=(U(K, \leq, B), \to_{\mathcal{K}})$ is a strong algebra. Moreover, if $\mathcal{K}$ is open (closed), then so is $\mathfrak{A}(\mathcal{K})$.
To prove the closure of $U(K, \leq, B)$ under the operation $\to_{\mathcal{K}}$, let $U$ and $V$ be two upsets in $B$. Then, notice that $(U \to_{\mathcal{K}} V)^c=\Diamond_R (j(U) \cap j(V)^c)$. As $B$ is closed under complement, finite intersection and $\Diamond_R$ and $j$ maps the upsets in $B$ to the elements of $B$, we can conclude that $(U \to_{\mathcal{K}} V)^c$ and hence $U \to_{\mathcal{K}} V$ is in $B$. Also, using the compatibility of the order with $R$, it is easy to see that $U \to_{\mathcal{K}} V$ is an upset. Hence, $U \to_{\mathcal{K}} V \in U(K, \leq, B)$.\\
To prove that $\to_{\mathcal{K}}$ is an implication, the only non-trivial part is to prove that if $U \subseteq V$, then $U \to_{\mathcal{K}} V=K$, for any upsets $U, V \in B$. Let $x \in K$ be an arbitrary element and assume $(x, y) \in R$ and $U \in N(y)$. As $U \subseteq V$ and $N(y)$ is upward closed for the upsets in $B$, we reach $V \in N(y)$. Therefore, $x \in U \to_{\mathcal{K}} V$.\\
Finally, for the open and closed conditions, if $\mathcal{K}$ is open, using Lemma \ref{PropertiesOfOpenAndClosed}, it is enough to prove that $U \subseteq V \to_{\mathcal{K}} U \cap V$, for any upsets $U, V \in B$. Let $x \in U$, $(x, y) \in R$ and $V \in N(y)$. Then, by the openness of the KN-frame, we know $U \cap V \in N(y)$, which completes the proof. A similar argument works for the closed case.
\end{example}

\begin{remark}\label{ThePassageToFull}
First, notice that a KN-frame is a combination of an intuitionistic Kripke frame with an independent monotone neighbourhood function restricted to the upsets of a given Boolean algebra of the subsets of $K$. The presence of the neighbourhood function is crucial as in the standard KN-frames, the definition of the implication simplifies to $U \to_{\mathcal{K}} V=\{x \in K \mid \forall y \in K \, [(x, y) \in R \; \text{and} \; y \in U \; \text{then} \; y \in V]\}$ which is always meet- and join-internalizing. Therefore, without the neighbourhood structure, the KN-frames are not capable of representing all implications. Secondly, note that starting from a KN-frame, it is always possible to drop the Boolean algebra $B$ to reach a full KN-frame and hence a greater strong algebra. More precisely, let $\mathcal{K}=(K, \leq, R, B, N)$ be a KN-frame and define $\mathcal{K}^f$ as $(K, \leq, R, P(K), N^f)$, where $N^f(x)=\{U \in U(K, \leq) \mid \exists V \in N(x) \, V \subseteq U\}$. It is easy to see that $\mathcal{K}^f$ is a full KN-frame. Moreover, as $N(x)$ is upward closed for the upsets in $B$, it is clear that $U \in N(x)$ iff $U \in N^f(x)$, for any $U \in U(K, \leq, B)$. Therefore, the strong algebra $
\mathfrak{A}(\mathcal{K})$ is a subalgebra of the strong algebra $
\mathfrak{A}(\mathcal{K}^f)$. Notice that the passage from $\mathcal{K}$ to $\mathcal{K}^f$ does not necessarily preserve the openness or the closedness of the original KN-frame. 
\end{remark}

\begin{example}\label{ExamOfSatndardFrame}
For any standard KN-frame $\mathcal{K}=(K, \leq, R)$, if $R \subseteq \, \leq$, the implication $\to_{\mathcal{K}}$ is open and if $R^{op} \subseteq \, \leq$, it is closed, where by $R^{op}$, we mean $\{(k, l) \in K^2 \mid (l, k) \in R\}$. For the first claim, using Lemma \ref{PropertiesOfOpenAndClosed}, it is enough to show $U \subseteq V \to_{\mathcal{K}} U \cap V$, for any $U, V \in U(K, \leq)$. For that purpose, assume $k \in U$. Then, for any $l \in V$, if $(k, l) \in R$, as $R \subseteq \, \leq$, we have $k \leq l$ and since $U$ is an upset, we have $l \in U$. Hence, $l \in U \cap V$. Therefore, $k \in V \to_{\mathcal{K}} (U \cap V)$. For the second claim, again using Lemma \ref{PropertiesOfOpenAndClosed}, it is enough to show $[(U \cup V) \to_{\mathcal{K}} V] \cup U=K$, for any $U, V \in U(K, \leq)$. Suppose $k \notin U$. Then, for any $l \in U \cup V$, if $(k, l) \in R$, as $R^{op} \subseteq \, \leq$, we have $l \leq k$. Hence, as $U$ is an upset and $k \notin U$, we have $l \notin U$. Hence, $l \in V$. Therefore, $k \in (U \cup V) \to_{\mathcal{K}} V$.\\
Employing these two families of implications, it is easy to provide a closed implication that is not open. Set $K=\{k, l\}$, $k \leq l$ and $R=\{(l, k), (k, k)\}$ and consider $\mathcal{K}=(K, \leq, R)$. It is clear that $R$ is compatible with the order and $R^{op} \subseteq \, \leq$. Hence, $\to_{\mathcal{K}}$ is closed. To show that it is not open, we show $K \to_{\mathcal{K}} \{l\}=\varnothing$ and as $\{l\} \nsubseteq K \to_{\mathcal{K}} \{l\}$, the implication cannot be open. For $K \to_{\mathcal{K}} \{l\}=\varnothing$, if either $k$ or $l$ is in $K \to_{\mathcal{K}} \{l\}$, as $(k,k), (l, k) \in R$, we must have $k \in \{l\}$ which is impossible.
\end{example}

\begin{remark}
It is not hard to see that being closed (as opposed to being open) is a very demanding condition restricting the form of the closed implications and hence the WBI's in a very serious way (see Theorem \ref{CharacOfWBI}). For instance, it is easy to see that the condition $R^{op} \subseteq \, \leq$ in Example \ref{ExamOfSatndardFrame} together with the compatibility condition of $R$ with respect to $\leq$, restricts the relation $R$ only to the ones in the from $\{(f(k), k) \in K^2 \mid k \in L\} \cup \{(k, k) \in K^2 \mid k \in L\}$, where $f: L \to K$ is an injective function, $L$ is a subset of the minimal elements of $K$ and $f(k) \geq k$, for any $k \in L$:

\begin{center} \small
\begin{tikzcd}[ampersand replacement=\&]
	{k_0} \& {k_1} \& {k_2} \\
	\&\&\& \cdots \\
	{f(k_{0})} \& {f(k_1)} \& {f(k_2)}
	\arrow["R", no head, from=3-1, to=1-1]
	\arrow["R", no head, from=3-2, to=1-2]
	\arrow["R", no head, from=3-3, to=1-3]
\end{tikzcd}
\end{center}
For the WBI's, even the function $f$ collapses to the identity and the only remaining data will be the set $L$. We do not prove these claims as they will not be used in the present paper. They are mentioned here only to convey the feeling that the study of these two families is not as justified as one might expect. However, we spend some time studying these two families as we need their behavior to prove the rarity of geometric implications in the next section. 
\end{remark}

So far, we have seen some concrete examples of the open and closed implications. The following theorem modifies the method of Example \ref{ExampleOfImplications} to construct the new open and closed implications from the old.

\begin{theorem}\label{OpenFromOpen}
Let $(\mathcal{B}, \to_{\mathcal{B}})$ be a strong algebra, $f: \mathcal{A} \to \mathcal{B}$ be an order-preserving and $g: \mathcal{B} \to \mathcal{A}$ be a finite meet preserving map. Then, $a \to_{\mathcal{A}} b=g(f(a) \to_{\mathcal{B}} f(b))$ is:
\begin{description}
\item[$(i)$]
an open implication over $\mathcal{A}$, if $\to_{\mathcal{B}}$ is open, $f$ preserves all binary meets and $gf(a) \geq a$, for any $a \in \mathcal{A}$.
\item[$(ii)$]
a closed implication over $\mathcal{A}$, if $\to_{\mathcal{B}}$ is closed, $f$ preserves all binary joins and $c \vee f(a)=1$ implies $g(c) \vee a=1$, for any $a \in \mathcal{A}$ and $c \in \mathcal{B}$.
\end{description}
\end{theorem}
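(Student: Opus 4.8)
The plan is to verify directly that $a \to_{\mathcal{A}} b = g(f(a) \to_{\mathcal{B}} f(b))$ satisfies the simplified open/closed criteria from Lemma \ref{PropertiesOfOpenAndClosed}, since we already know from Example \ref{ExampleOfImplications} that the operation is an implication whenever $f$ is order-preserving and $g$ preserves finite meets. So the only work is checking the extra inequalities, and the structure of the proof is the same in both parts: push the inequality to be proved through $g$ down to an inequality inside $\mathcal{B}$, apply the hypothesis that $\to_{\mathcal{B}}$ is open (resp. closed), and use the stated side conditions on $f$ and $g$ to close the gap.

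For part $(i)$, by Lemma \ref{PropertiesOfOpenAndClosed}$(i)$ it suffices to show $a \leq b \to_{\mathcal{A}} (a \wedge b)$, i.e. $a \leq g\big(f(b) \to_{\mathcal{B}} f(a \wedge b)\big)$. Since $\to_{\mathcal{B}}$ is open, $f(b) \to_{\mathcal{B}} (f(a) \wedge f(b)) \geq f(a)$ by the same lemma applied in $\mathcal{B}$. Because $f$ preserves binary meets, $f(a \wedge b) = f(a) \wedge f(b)$, so $f(b) \to_{\mathcal{B}} f(a \wedge b) \geq f(a)$; applying the order-preserving map $g$ gives $g\big(f(b) \to_{\mathcal{B}} f(a \wedge b)\big) \geq g f(a) \geq a$, where the last step is the hypothesis $gf(a) \geq a$. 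That completes part $(i)$.

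For part $(ii)$, by Lemma \ref{PropertiesOfOpenAndClosed}$(ii)$ it suffices to show $\big((a \vee b) \to_{\mathcal{A}} a\big) \vee b = 1$, that is, $g\big(f(a \vee b) \to_{\mathcal{B}} f(a)\big) \vee b = 1$. Since $\to_{\mathcal{B}}$ is closed, the same lemma in $\mathcal{B}$ gives $\big((f(a) \vee f(b)) \to_{\mathcal{B}} f(a)\big) \vee f(b) = 1$. Using that $f$ preserves binary joins, $f(a \vee b) = f(a) \vee f(b)$, so with $c := f(a \vee b) \to_{\mathcal{B}} f(a) \in \mathcal{B}$ we have $c \vee f(b) = 1$. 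The hypothesis on $g$ then yields $g(c) \vee b = 1$, which is exactly what we wanted.

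The routine checks are the monotonicity facts (that $b \mapsto a \to_{\mathcal{A}} b$ is increasing and $a \mapsto a \to_{\mathcal{A}} b$ is decreasing, and that internal reflexivity and transitivity hold) — but these are already covered by Example \ref{ExampleOfImplications}, so they need not be repeated. I do not anticipate a genuine obstacle here: the side conditions in the statement have been reverse-engineered precisely so that each step goes through, and the only mild subtlety is making sure to invoke the \emph{simplified} one-inequality characterizations from Lemma \ref{PropertiesOfOpenAndClosed} rather than the original two-inequality definitions, since it is the simplified forms that interact cleanly with $f$ and $g$.
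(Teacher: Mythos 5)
Your proof is correct and follows essentially the same route as the paper: both reduce to the one-inequality criteria of Lemma \ref{PropertiesOfOpenAndClosed}, defer the implication axioms to Example \ref{ExampleOfImplications}, and then push the inequality through $f$, the open/closed property of $\to_{\mathcal{B}}$, and $g$ in the same order. No gaps.
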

\begin{proof}
To check whether $\to_{\mathcal{A}}$ is open or closed, we use the criterion of Lemma \ref{PropertiesOfOpenAndClosed}. For $(i)$, as $\to_{\mathcal{B}}$ is open, $f$ preserves the binary meets and $gf(a) \geq a$, we have $b \to_{\mathcal{A}} a \wedge b=g(f(b) \to_{\mathcal{B}} f(a \wedge b))=g(f(b) \to_{\mathcal{B}} f(a) \wedge f(b)) \geq g(f(a)) \geq a$. For $(ii)$, as $\to_{\mathcal{B}}$ is closed and $f$ preserves the binary joins, we have $[f(a \vee b) \to_{\mathcal{B}} f(b)] \vee f(a)=[f(a) \vee f(b) \to_{\mathcal{B}} f(b)] \vee f(a)=1$. Hence, by the property, we have $g(f(a \vee b) \to_{\mathcal{B}} f(b)) \vee a=1$ which implies $[(a \vee b) \to_{\mathcal{A}} b] \vee a=1$.
\end{proof}

\begin{remark}
In practice, part $(i)$ in Lemma \ref{OpenFromOpen} is useful in situations where $f \dashv g$ and $f$ is binary meet preserving and part $(ii)$ is used when $g \dashv f$, $g$ is finite meet preserving and $f$ is binary join preserving.  
\end{remark}

The following theorem provides a complete characterization of the WBI's. We will see that their general form is not far from the one provided in Example \ref{BasicExample}. 
\begin{theorem}\label{CharacOfWBI}
Let $\mathcal{A}$ be a bounded distributive lattice. Then, for any weakly Boolean implication $\to$ over $\mathcal{A}$, the interval $[\neg 1, 1]=\{x \in \mathcal{A} \mid \neg 1 \leq x \leq 1\}$ with its induced order is a Boolean algebra. Moreover, $a \to b=\neg a \vee b$ and $\neg a$ is the complement of $a \vee \neg 1$ in $[\neg 1, 1]$. Conversely, if for some $m \in \mathcal{A}$, the interval $[m, 1]$ with its induced order is a Boolean algebra, and $n(a)$ is the complement of $a \vee m$ in $[m, 1]$, then $a \to_m b=n(a) \vee b$ is a WBI over $\mathcal{A}$. Note that $\neg_m a=n(a)$ and $\neg_m 1=n(1)=m$.
\end{theorem}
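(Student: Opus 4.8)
The plan is to begin by establishing, for an arbitrary weakly Boolean implication $\to$, the normal form $a \to b = \neg a \vee b$; the rest of the forward direction is then a short computation, and the converse is a direct check of the implication axioms, with openness and closedness handled via the criteria of Lemma~\ref{PropertiesOfOpenAndClosed}. One half of the normal form is cheap: since $\to$ is increasing in its second argument, $\neg a = a \to 0 \leq a \to b$, and by the special case of Lemma~\ref{PropertiesOfOpenAndClosed}(i) we have $b \leq a \to b$, so $\neg a \vee b \leq a \to b$. For the reverse inequality I would exploit closedness through the identity $b \vee \neg b = 1$ (Lemma~\ref{PropertiesOfOpenAndClosed}(ii)) together with internal transitivity: writing $a \to b = (a \to b) \wedge (b \vee \neg b)$ and distributing, the joinand $(a \to b) \wedge (b \to 0)$ is $\leq a \to 0 = \neg a$ by internal transitivity while $(a \to b) \wedge b \leq b$, whence $a \to b \leq \neg a \vee b$. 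This normal form is the linchpin of the whole argument, and pinpointing the right combination of axioms for the $\leq$ direction is the one place where some thought is needed; everything afterwards is bookkeeping.

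Granting the normal form, the forward direction unfolds as follows. The interval $[\neg 1, 1]$ is a bounded distributive sublattice of $\mathcal{A}$ with least element $\neg 1$ and greatest element $1$, because finite meets and joins of elements above $\neg 1$ stay above $\neg 1$. Since $\to$ is decreasing in its first argument we have $\neg 1 \leq \neg a$, so both $\neg a$ and $a \vee \neg 1$ lie in $[\neg 1, 1]$. Using $a \wedge \neg a = a \wedge \neg 1$ (Lemma~\ref{PropertiesOfOpenAndClosed}(i)) one computes $(a \vee \neg 1) \wedge \neg a = (a \wedge \neg 1) \vee \neg 1 = \neg 1$, and using $a \vee \neg a = 1$ one gets $(a \vee \neg 1) \vee \neg a = 1$; hence $\neg a$ is the complement of $a \vee \neg 1$ in $[\neg 1, 1]$. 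Specialising to $a \in [\neg 1, 1]$, where $a \vee \neg 1 = a$, shows that every element of $[\neg 1, 1]$ is complemented, so $[\neg 1, 1]$ is a Boolean algebra.

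For the converse, set $a \to_m b = n(a) \vee b$ with $n(a)$ the complement of $a \vee m$ in the Boolean algebra $[m, 1]$. I would first record two facts: $n$ is order-reversing (complementation in a Boolean algebra reverses order), and $n(a) \vee a = 1$, which follows from $n(a) \vee (a \vee m) = 1$ together with $m \leq n(a)$. Monotonicity of $\to_m$ in both arguments is then immediate, and the condition ``$a \leq b$ implies $a \to_m b = 1$'' of Remark~\ref{OtherDefImp} follows from $n(a) \vee b \geq n(a) \vee a = 1$. For internal transitivity I would distribute $(n(a) \vee b) \wedge (n(b) \vee c)$ into its four joinands; three of them are visibly $\leq n(a) \vee c$, and the fourth satisfies $b \wedge n(b) \leq (b \vee m) \wedge n(b) = m \leq n(a)$.

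It remains to check openness and closedness against Lemma~\ref{PropertiesOfOpenAndClosed} and to read off the negation identities. By Lemma~\ref{PropertiesOfOpenAndClosed}(i), openness amounts to $a \leq b \to_m (a \wedge b)$, and indeed $b \to_m (a \wedge b) = n(b) \vee (a \wedge b) = (n(b) \vee a) \wedge (n(b) \vee b) = n(b) \vee a \geq a$. By Lemma~\ref{PropertiesOfOpenAndClosed}(ii), closedness amounts to $\big((a \vee b) \to_m a\big) \vee b = 1$, which holds since $n(a \vee b) \vee a \vee b = 1$ by $m \leq n(a \vee b)$ and the complementation identity for $n(a \vee b)$. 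Finally $\neg_m a = a \to_m 0 = n(a) \vee 0 = n(a)$, and $\neg_m 1 = n(1)$ is the complement of $1$ in $[m, 1]$, namely $m$. None of these last verifications poses any difficulty beyond routine computation.
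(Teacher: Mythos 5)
Your proposal is correct and follows essentially the same route as the paper's proof: the same complementation computation for $\neg a$ against $a \vee \neg 1$, the same two-sided derivation of $a \to b = \neg a \vee b$ (using openness for $\geq$ and closedness plus internal transitivity for $\leq$), and the same auxiliary facts $n(a) \vee a = 1$ and $b \wedge n(b) \leq n(a)$ for the converse. The only difference is cosmetic ordering (you prove the normal form before the Boolean-algebra claim), so nothing further is needed.
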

\begin{proof}
For the first part, by Lemma \ref{PropertiesOfOpenAndClosed}, we have $(a \vee \neg 1) \wedge \neg a= (a \wedge \neg a)\vee (\neg 1 \wedge \neg a) =(a \wedge \neg 1) \vee \neg 1=\neg 1$ and $(a \vee \neg 1) \vee \neg a=1$, for any $a \in \mathcal{A}$. Therefore, $\neg a$ is the complement of $a \vee \neg 1$ over $[\neg 1, 1]$. Moreover, for any $a \geq \neg 1$, as $a \vee \neg 1=a$, the element $\neg a$ is the complement of $a$ which implies that $[\neg 1, 1]$ is a Boolean algebra.
To show $a \to b=\neg a \vee b$, note that $\neg a=(a \to 0) \leq (a \to b)$. As the implication is open, by Lemma \ref{PropertiesOfOpenAndClosed}, we have $b \leq a \to b$. Hence, $\neg a \vee b \leq a \to b$. For the converse, as the implication is also closed, by Lemma \ref{PropertiesOfOpenAndClosed}, $b \vee \neg b=1$. Hence, $a \to b=(a \to b) \wedge (b \vee \neg b)=((a \to b) \wedge b) \vee ((a \to b) \wedge \neg b) \leq b \vee (a \to 0)=\neg a \vee b$. \\
Conversely, let $[m, 1]$ be a Boolean algebra and $n(a)$ be the complement of $a \vee m$ in $[m, 1]$. First, we prove that $a \vee n(a)=1$ and $b \wedge n(b) \leq n(a)$, for any $a, b \in \mathcal{A}$. For the former, as $n(a)$ is the complement of $a \vee m$, we have $n(a) \vee (a \vee m)=1$ and as $n(a) \in [m, 1]$, we reach $m \leq n(a)$. Hence, $n(a) \vee a=1$. For the latter, as $n(b)$ is the complement of $b \vee m$ over $[m, 1]$, we have $(b \vee m) \wedge n(b)=m$ which implies $b \wedge n(b) \leq m$. Again as $m \leq n(a)$, we reach $b \wedge n(b) \leq n(a)$.
Now, to show that $a \to_m b=n(a) \vee b$ is an implication, we must check the properties in Remark \ref{OtherDefImp}. For $(i')$, if $a \leq b$, then $1=n(a) \vee a \leq n(a) \vee b=a \to_m b$. For $(ii)$, using the distributivity, we have
$
(a \to_m b) \wedge (b \to_m c)=(n(a) \vee b) \wedge (n(b) \vee c) \leq n(a) \vee (b \wedge n(b)) \vee c \leq n(a) \vee n(a) \vee c=a \to_m c$. Finally, to show that $\to_m$ is both open and closed, as $a \to_m (a \wedge b)=n(a) \vee (a \wedge b)=(n(a) \vee a) \wedge (n(a) \vee b)=(n(a) \vee b) \geq b$, the implication is clearly open. For closedness, note that $((a \vee b) \to_m a) \vee b=n(a \vee b) \vee a \vee b=1$.
\end{proof}

\begin{remark}\label{PropertiesOfWBI}
Note that in a Boolean algebra, the complement of $a \vee m$ in $[m, 1]$ is $\bar{a} \vee m$, where $\bar{a}$ is the complement of $a$. Therefore, by Theorem \ref{CharacOfWBI}, it is clear that the implications $a \to_m b=\bar{a} \vee b \vee m$ are the only WBI's over a Boolean algebra. Having made this observation, one wonders whether the presence of a WBI forces the ground lattice to be a Boolean algebra itself. However,  as the trivial implication is weakly Boolean and it is definable over any bounded distributive lattice, it is clear that the ground lattice structure of a weakly Boolean strong algebra can be quite general and not necessarily Boolean.
\end{remark}

\subsection{Weakly Boolean Spaces}\label{SubsectionWBIOverSpaces}

In this subsection, we will use Theorem \ref{CharacOfWBI} to provide a characterization for the WBI's over the locales of opens of the topological spaces. The characterization will be useful in the next section. First, let us recall some basic notions from topology that we need below.
A topological space $X$ is called \emph{discrete} if all of its subsets are open. It is called \emph{indiscrete} if its only opens are $\varnothing$ and $X$. It is called \emph{locally indiscrete} if for any $x \in X$, there exists an open $U \subseteq X$ such that $x \in U$ and the induced topology on $U$ is indiscrete. A space is locally indiscrete iff all of its closed subsets are open. More generally:

\begin{lemma}\label{Well-definedCore}
Let $X$ be a topological space and $M \subseteq X$ be an open subset. Then, the following are equivalent: 
\begin{description}
\item[$(i)$]
For any closed $K \subseteq X$, the union $K \cup M$ is open.
\item[$(ii)$]
The subspace $X-M$ is locally indiscrete.
\end{description}
\end{lemma}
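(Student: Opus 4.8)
The plan is to prove the equivalence of the two conditions in Lemma~\ref{Well-definedCore} by unpacking what each says in terms of open and closed sets, passing between $X$ and the subspace $X - M$ via the correspondence between (relatively) open and closed subsets. Throughout, write $Y = X - M$, which is closed in $X$; relatively open subsets of $Y$ are exactly sets of the form $U \cap Y$ with $U \in \mathcal{O}(X)$, and relatively closed subsets of $Y$ are exactly sets of the form $K \cap Y$ with $K$ closed in $X$. Recall from the discussion preceding the lemma that $Y$ is locally indiscrete iff every relatively closed subset of $Y$ is relatively open in $Y$.

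For the direction $(i) \Rightarrow (ii)$, I would start with an arbitrary relatively closed $C \subseteq Y$ and show it is relatively open. Write $C = K \cap Y$ for some closed $K \subseteq X$. By hypothesis $(i)$, $K \cup M$ is open in $X$. Then intersect with $Y$: $(K \cup M) \cap Y = (K \cap Y) \cup (M \cap Y) = C \cup \varnothing = C$, since $M \cap Y = M \cap (X - M) = \varnothing$. Thus $C = (K \cup M) \cap Y$ is the intersection of an open subset of $X$ with $Y$, hence relatively open in $Y$. Since $C$ was arbitrary, $Y$ is locally indiscrete.

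For the direction $(ii) \Rightarrow (i)$, take an arbitrary closed $K \subseteq X$ and show $K \cup M$ is open in $X$. The set $K \cap Y$ is relatively closed in $Y$, so by $(ii)$ it is relatively open, i.e.\ $K \cap Y = U \cap Y$ for some $U \in \mathcal{O}(X)$. The candidate open set exhibiting openness of $K \cup M$ should be $U \cup M$, which is open in $X$ since both $U$ and $M$ are. It remains to check $K \cup M = U \cup M$. Intersecting each side with $M$ gives $M$ on both sides; intersecting each side with $Y$ gives, on the left, $(K \cap Y) \cup (M \cap Y) = K \cap Y$, and on the right, $(U \cap Y) \cup (M \cap Y) = U \cap Y = K \cap Y$. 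Since $X = M \cup Y$, agreement on $M$ and on $Y$ forces $K \cup M = U \cup M$, so $K \cup M$ is open.

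The argument is essentially a bookkeeping exercise with the decomposition $X = M \sqcup Y$ and the two standard bijections (relative opens $\leftrightarrow$ traces of opens, relative closeds $\leftrightarrow$ traces of closeds); the only point requiring a little care is the $(ii) \Rightarrow (i)$ direction, where one must correctly guess the witnessing open set $U \cup M$ and verify the set equality by checking it separately on the two pieces $M$ and $Y$ of the partition. I would also note at the outset the special case $M = \varnothing$, which recovers the stated fact that $X$ is locally indiscrete iff all its closed subsets are open, as a sanity check on the formulation.
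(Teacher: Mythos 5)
Your proof is correct and follows essentially the same route as the paper's: both directions pass through the trace correspondence on $Y = X - M$, using $C = (K\cup M)\cap Y$ for $(i)\Rightarrow(ii)$ and the identity $K\cup M = U\cup M$ for $(ii)\Rightarrow(i)$. The only difference is that you verify $K\cup M=U\cup M$ explicitly on the two pieces of the partition, where the paper simply asserts it.
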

\begin{proof}
To prove $(ii)$ from $(i)$, let $L$ be a closed subset of $X-M$. We have to show that $L$ is also open. As $L$ is closed in $X-M$, there is a closed subset $K$ of $X$ such that $L=K \cap (X-M)$. By $(i)$, the set $K \cup M$ is open. As $L=(K \cup M) \cap (X-M)$, the set $L$ is open in $X-M$. Conversely, to prove $(i)$ from $(ii)$, assume that $K$ is a closed subset in $X$. Hence, $K \cap (X-M)$ is closed and hence open in $X-M$, by $(ii)$. This implies the existence of an open $U$ in $X$ such that $K \cap (X-M)=U \cap (X-M) $. Hence, $K \cup M=U \cup M$. Finally, as both $U$ and $M$ are open in $X$, the subset $K \cup M$ is open in $X$.
\end{proof}
By definition, it is clear that any discrete space is locally indiscrete. The converse also holds for $T_0$ spaces. 

\begin{lemma}
Any locally indiscrete $T_0$ space is discrete.
\end{lemma}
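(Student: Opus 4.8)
The plan is to reduce the statement to showing that every singleton of $X$ is open, since then every subset of $X$, being a union of singletons, is open and $X$ is discrete. So I would fix an arbitrary point $x \in X$ and, using local indiscreteness, choose an open set $U \subseteq X$ with $x \in U$ such that the subspace topology on $U$ is indiscrete. The goal is to show that this forces $U = \{x\}$.

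To establish $U = \{x\}$, I would argue by contradiction: suppose $U$ contains some point $y$ with $y \neq x$. Since $X$ is $T_0$, there is an open $V \in \mathcal{O}(X)$ that contains exactly one of $x$ and $y$; without loss of generality $x \in V$ and $y \notin V$ (the other case is handled symmetrically). Then $V \cap U$ is open in the subspace $U$, it is nonempty since $x \in V \cap U$, and it is a proper subset of $U$ since $y \notin V \cap U$. This contradicts the assumption that $U$, with its indiscrete topology, has no open subsets other than $\varnothing$ and $U$. Hence $U = \{x\}$, so $\{x\}$ is open in $X$; as $x$ was arbitrary, $X$ is discrete.

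I do not expect a genuine obstacle here: the argument is essentially routine. The only points that need a little care are the passage to the subspace topology on $U$ (so that $V \cap U$ is legitimately an open subset of $U$) and the harmless case distinction forced by the asymmetric formulation of $T_0$. As an alternative, one could instead invoke the characterization recalled just above that a space is locally indiscrete iff all its closed subsets are open, deduce that $\overline{\{x\}}$ is open, and separate points inside it; but the direct argument through an indiscrete open neighbourhood of $x$ is the shortest and most transparent route.
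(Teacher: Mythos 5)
Your proof is correct, but it takes a genuinely different route from the paper's. You work locally: fix $x$, take an indiscrete open neighbourhood $U$ of $x$ guaranteed by local indiscreteness, and use $T_0$ to show directly that $U$ can contain no second point, so $U=\{x\}$ is open. The paper instead works globally: it uses the fact (stated just before the lemma) that in a locally indiscrete space every open set is closed and vice versa, picks for each $y\neq x$ a separating open $U_y$, replaces it by its (open) complement if necessary so that it always contains $x$, and then forms the \emph{arbitrary} intersection $\bigcap_{y\neq x} U_y=\{x\}$, which is open because the opens coincide with the closeds and the latter are stable under arbitrary intersections. Your argument is the more elementary of the two: it needs neither the clopen characterization of local indiscreteness nor closure of opens under infinite intersections, only the defining data of an indiscrete neighbourhood. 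The paper's argument, on the other hand, exhibits the clopen lattice structure that it exploits elsewhere (e.g.\ in Lemma \ref{Well-definedCore} and Corollary \ref{TheCor}), so it is more in keeping with the surrounding development. Both proofs are complete; your handling of the $T_0$ case split and of the subspace topology on $U$ is exactly the care the argument requires.
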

\begin{proof}
Let $x \in X$ be a point. As $X$ is $T_0$, for any $y \neq x$, there is an open $U_y$ such that either $x \in U_y$ and $y \notin U_y$ or $x \notin U_y$ and $y \in U_y$. As any open is closed in the space, w.l.o.g, we can assume the first case. As closed and open subsets are identical, opens are closed under arbitrary intersections. Hence, set $U=\bigcap_{y \neq x} U_y$. It is easy to see that $U=\{x\}$. Hence, the singletons and consequently all subsets are open and hence the space is discrete.
\end{proof}

We are now ready to provide a characterization for all the WBI's over the locales of topological spaces, as promised.

\begin{corollary}\label{TheCor}
Let $X$ be a topological space and $M \subseteq X$ be an open subset such that $X-M$ is locally indiscrete. Then, the binary map $U \to V=U^c \cup V \cup M$ is a WBI over $\mathcal{O}(X)$. Conversely, any WBI over $\mathcal{O}(X)$ is in the form $U \to V=U^c \cup V \cup M$, where $M=\neg X $ is an open subset such that $X-M$ is locally indiscrete. 
\end{corollary}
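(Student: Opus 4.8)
The plan is to deduce both directions of Corollary~\ref{TheCor} directly from the characterization of WBIs in Theorem~\ref{CharacOfWBI}, applied to the bounded distributive lattice $\mathcal{A} = \mathcal{O}(X)$, together with the topological translation provided by Lemma~\ref{Well-definedCore}. The key observation is that in $\mathcal{O}(X)$ a complement in an interval $[M,X]$ of opens is computed set-theoretically whenever it exists.

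For the \emph{converse direction} (every WBI has the stated form), let $\to$ be a WBI over $\mathcal{O}(X)$ and set $M = \neg X = (X \to \varnothing)$. By Theorem~\ref{CharacOfWBI}, the interval $[M, X]$ in $\mathcal{O}(X)$ is a Boolean algebra, $U \to V = \neg U \vee V = \neg U \cup V$, and $\neg U$ is the complement of $U \cup M$ inside $[M, X]$. First I would identify this complement concretely: if $W \in [M,X]$ satisfies $(U \cup M) \cap W = M$ and $(U \cup M) \cup W = X$, then $W \supseteq M$, $W \setminus M$ is disjoint from $U$, and $W \cup U = X$; these force $W = U^c \cup M$ (one checks $U^c \cup M$ is open precisely because it is the complement in the Boolean algebra $[M,X]$, which exists by Theorem~\ref{CharacOfWBI}). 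Hence $\neg U = U^c \cup M$ and therefore $U \to V = (U^c \cup M) \cup V = U^c \cup V \cup M$, as claimed. It remains to see that $X - M$ is locally indiscrete. For this, note that for any closed $K \subseteq X$, the open set $K^c$ satisfies $\neg(K^c) = K \cup M$, so $K \cup M$ is open; by Lemma~\ref{Well-definedCore}, $X - M$ is locally indiscrete.

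For the \emph{forward direction}, suppose $M \subseteq X$ is open with $X - M$ locally indiscrete; I would verify that $U \to V = U^c \cup V \cup M$ is a well-defined WBI by exhibiting it in the form given by the second half of Theorem~\ref{CharacOfWBI}. Well-definedness (i.e.\ that $U^c \cup V \cup M$ is open) follows from Lemma~\ref{Well-definedCore}: $U^c$ is closed, so $U^c \cup M$ is open, and adjoining the open $V$ keeps it open. Next I would check that $[M, X]$ is a Boolean algebra inside $\mathcal{O}(X)$: for $W \in [M,X]$, the set $W^c \cup M$ is open by Lemma~\ref{Well-definedCore} (since $W^c$ is closed), lies in $[M,X]$, and is easily seen to be the complement of $W$ in $[M,X]$. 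Then the complement of $U \cup M$ in $[M,X]$ is $n(U) = (U \cup M)^c \cup M = (U^c \cap M^c) \cup M = U^c \cup M$, so $U \to_M V = n(U) \cup V = U^c \cup V \cup M = U \to V$. By the converse half of Theorem~\ref{CharacOfWBI}, $\to_M$ is a WBI over $\mathcal{O}(X)$, completing the proof.

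The main obstacle, such as it is, is bookkeeping: one must be careful that the lattice-theoretic complement appearing in Theorem~\ref{CharacOfWBI} really coincides with the naive set-theoretic complement-relative-to-$M$ \emph{and} that the resulting set is open — both of which hinge on the equivalence in Lemma~\ref{Well-definedCore} being invoked in the correct direction. There is no genuine difficulty once the two lemmas are in place; the work is entirely in matching the abstract data $(m, n(\cdot))$ of Theorem~\ref{CharacOfWBI} with the topological data $(M, {}\cdot{}^c \cup M)$.
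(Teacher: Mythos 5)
Your proposal is correct and takes essentially the same route as the paper's own proof: both directions reduce to Theorem \ref{CharacOfWBI} via the identification of the complement of $U \cup M$ in the interval $[M,X]$ with the set $U^c \cup M$, with Lemma \ref{Well-definedCore} supplying the translation between openness of $K \cup M$ and local indiscreteness of $X-M$. You merely spell out the set-theoretic computation of that interval complement in slightly more detail than the paper does.
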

\begin{proof}
For the first part, by Lemma \ref{Well-definedCore}, $\mathcal{O}(X)$ is closed under the operation $U \mapsto U^c \cup M$ as $X-M$ is locally indiscrete and hence the subset $U^c \cup M$ is open, for any open $U$. This proves that $\to$ is well-defined over $\mathcal{O}(X)$. Now, note that the interval $[M, X]$ in $\mathcal{O}(X)$ is a Boolean algebra, simply because $U^c \cup M$ is the complement of $U \supseteq M$. Therefore, by Theorem \ref{CharacOfWBI}, the operation $U \to V=n(U) \cup V$ is a WBI, where $n(U)$ is the complement of $U \cup M$ in $[M, X]$. As $n(U)=U \cup M^c$, we know that $U \to V=U^c \cup V \cup M$ is a WBI. Conversely, if $\to$ is a WBI over $\mathcal{O}(X)$, then by Theorem \ref{CharacOfWBI} again, the interval $[\neg X, X]$ is a Boolean algebra and $\neg U$ is the complement of $U \cup \neg X$ in $[\neg X, X]$. Set $M=\neg X$ and note that $M$ is open, as $\neg X \in \mathcal{O}(X)$.
As the complement of $U \cup M$ in $[M, X]$ is $U^c \cup M$, we have $U \to V=U^c \cup V \cup M$. Moreover, as $\neg U=U^c \cup M$ is open, for any open $U$, the space $X-M$ is locally indiscrete, by Lemma \ref{Well-definedCore}.
\end{proof}

\begin{remark}\label{TheRem}
For any WBI over $\mathcal{O}(X)$, its unique $M=\neg X$ is called its \emph{core}. Note that if $M=\varnothing$, then the space $X$ is locally indiscrete and the trivial and the Boolean implications are the WBI's with the cores $X$ and $\varnothing$, respectively. The strong space $(X, \to)$ is called a \emph{weakly Boolean space}, (WBS, for short), if $\to$ is a WBI over $\mathcal{O}(X)$. Note that if a space is locally indiscrete, then by Corollary \ref{TheCor}, for any open $N$, there is a WBI with the core $N$, because $X-N$ is locally indiscrete. Also notice that a continuous map $f: X \to Y$ induces a strong map $f: (X, \to_X) \to (Y, \to_Y)$ between two WBS's, iff $f^{-1}(M_Y)=M_X$.
\end{remark}

\begin{example}\label{ExamOfHaus}
Let $X$ be a topological space and $N \subseteq X$ be a closed discrete subspace. Then, by Corollary \ref{TheCor}, $U \to V=U^c \cup V \cup (X-N)$ is a WBI, as $X-N$ is open and $N$ is locally indiscrete. Corollary \ref{TheCor} also shows that if $X$ is $T_0$, then these are the only WBI's over $X$, as in a $T_0$ space, the only locally indiscrete subspaces are the discrete ones. Moreover, for the later reference, note that if $X$ is a Hausdorff space, then $U \to V=U^c \cup V \cup (X-N)$ is a WBI, for any finite $N \subseteq X$.
\end{example}

\subsection{Open-irreducible and Closed-irreducible Maps} \label{OpenClosedIrreducible}
In this subsection, we will first introduce the two families of open- and closed-irreducible maps as the generalizations of the injective open and closed continuous functions, respectively. Then, we will
prove that an implication $\to$ over $\mathcal{O}(X)$ is open (closed) iff the inverse image of any open-irreducible (closed-irreducible) map into $X$ transforms $\to$ to another implication. 

\begin{definition}
A topological space $X$ is \emph{open-irreducible (closed-irreducible)}, if $A \cap B=\varnothing$ implies $A=\varnothing$ or $B=\varnothing$, for any open (closed) subsets $A, B \subseteq X$. 
\end{definition}

Intuitively, the open-irreducible (closed-irreducible) spaces are roughly the spaces in which the open (closed) subsets are so big that any two non-empty  open (closed) subsets intersect. 

\begin{example}
Any singleton space is both open- and closed-irreducible. Any space that is the closure of a single point is open-irreducible because if $X=\overline{\{x\}}$ and $U$ and $V$ are two nonempty opens, then $x \in U$ as otherwise, $x \in U^c$ and since $U^c$ is closed, we have $U^c=\overline{\{x\}}$ which implies $U=\varnothing$. Similarly, $x \in V$. Therefore, $U \cap V \neq \varnothing$. For closed-irreducible spaces, it is easy to see that $X$ is closed-irreducible iff $\overline{\{x\}} \cap \overline{\{y\}}$ is non-empty, for any $x, y \in X$. For instance, if $(P, \leq)$ is a poset, where any two elements have an upper bound, then $P$ with the topology of the upsets as the closed subsets is closed-irreducible. The reason is that $\overline{\{x\}}$ is $\{z \in P \mid z \geq x\}$. Hence, if $w$ is the upper bound of $x$ and $y$, then $w \in \overline{\{x\}} \cap \overline{\{y\}}$.
\end{example}

For our purpose, we must look into the relative version of the open-irreducible (closed-irreducible) space, where a space is replaced by a continuous map.

\begin{definition}
A continuous map $f: X \to Y$ is called \emph{open-irreducible (closed-irreducible)} if it is open (closed) and $f[A] \cap f[B]=f[A \cap B]$, for any open (closed) subsets $A, B \subseteq X$. As the identity function is open-irreducible (closed-irreducible) and the composition of any two open-irreducible (closed-irreducible) maps is also open-irreducible (closed-irreducible), considering all topological spaces together with the open-irreducible (resp. closed-irreducible) maps forms a category that we denote by $\mathbf{OI}$ (resp. $\mathbf{CI}$).
\end{definition}

\begin{example}
The unique map $!: X \to \{*\}$ is open-irreducible (closed-irreducible) iff $X$ is open-irreducible (closed-irreducible). It is easy to prove that an open (a closed) map is open-irreducible (closed-irreducible) iff the fiber $f^{-1}(y)$ is open-irreducible (closed-irreducible) as a subspace of $X$, for any $y \in Y$. As a special case, all injective open (closed) maps are trivially open-irreducible (closed-irreducible), as their fibers are singletons or empty. 
\end{example}

Note that if $f: X \to Y$ is an open map, then the map $f_!: \mathcal{O}(X) \to \mathcal{O}(Y)$ defined by $f_!(U)=f[U]$ is the left adjoint of $f^{-1}$, as $U \subseteq f^{-1}(V)$ iff $f[U] \subseteq V$. The map $f_!$ is well-defined as $f$ is open. Having this left adjoint, we can say that for any open map $f$, it is open-irreducible iff the left adjoint of $f^{-1}$ preserves the binary meets. Similarly, if $f: X \to Y$ is a closed map, then the map $f_*: \mathcal{O}(X) \to \mathcal{O}(Y)$ defined by $f_*(U)=f[U^c]^c$ is the right adjoint of $f^{-1}$, as $f^{-1}(V) \subseteq U$ iff $V \subseteq f[U^c]^c$. Note that $f_*$ is well-defined as $f$ is closed. Again, using this right adjoint, we can say that for any closed map $f$, it is closed-irreducible iff the right adjoint of $f^{-1}$ preserves the binary joins.

In the following theorem, we will characterize the open (closed) implications based on their preservability under the inverse image of the open-irreducible (closed-irreducible) maps.

\begin{theorem}\label{PreservationUnderEmbedding}
For a strong space $(Y, \to_Y)$, the followings are equivalent:
\begin{description}
\item[$(i)$]
For any topological space $X$ and any open-irreducible map $f: X \to Y$, there is an open implication $\to_X$ over $\mathcal{O}(X)$ such that $f^{-1}(U \to_Y V)=f^{-1}(U) \to_X f^{-1}(V)$, for any $U, V \in \mathcal{O}(Y)$.
\item[$(ii)$]
For any topological space $X$ and any open-irreducible map $f: X \to Y$, there is an implication $\to_X$ over $\mathcal{O}(X)$ such that we have $f^{-1}(U \to_Y V)=f^{-1}(U) \to_X f^{-1}(V)$, for any $U, V \in \mathcal{O}(Y)$.
\item[$(iii)$]
For any topological space $X$ and any open embedding $f: X \to Y$, there is an implication $\to_X$ over $\mathcal{O}(X)$ such that $f^{-1}(U \to_Y V)=f^{-1}(U) \to_X f^{-1}(V)$, for any $U, V \in \mathcal{O}(Y)$.
\item[$(iv)$]
The implication $\to_Y$ is open localizable, i.e., for any open subset $Z \subseteq Y$ and any $U_1, V_1, U_2, V_2 \in \mathcal{O}(Y)$, if $U_1 \cap Z=U_2 \cap Z$ and $V_1 \cap Z=V_2 \cap Z$, then $(U_1 \to_Y V_1) \cap Z=(U_2 \to_Y V_2) \cap Z$.
\item[$(v)$]
The implication $\to_Y$ is open.
\end{description}
The same also holds, replacing open with closed, everywhere in the theorem.
\end{theorem}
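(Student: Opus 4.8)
The plan is to prove the cycle of implications $(v) \Rightarrow (i) \Rightarrow (ii) \Rightarrow (iii) \Rightarrow (iv) \Rightarrow (v)$, since $(i) \Rightarrow (ii) \Rightarrow (iii)$ are trivial weakenings (every open implication is an implication; every open embedding is an open-irreducible map, its fibers being singletons or empty). The two substantive arcs are $(v) \Rightarrow (i)$ and $(iv) \Rightarrow (v)$, with $(iii) \Rightarrow (iv)$ a bridging step. I would treat $(iii) \Rightarrow (iv)$ first: given an open $Z \subseteq Y$, take $f$ to be the open embedding $Z \hookrightarrow Y$, so $f^{-1}(U) = U \cap Z$. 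The hypothesis gives an implication $\to_X$ on $\mathcal{O}(Z)$ with $(U \to_Y V) \cap Z = (U\cap Z) \to_X (V \cap Z)$; since the right-hand side depends only on $U \cap Z$ and $V \cap Z$, open localizability follows immediately.

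For $(v) \Rightarrow (i)$: let $f : X \to Y$ be open-irreducible. Then $f^{-1} : \mathcal{O}(Y) \to \mathcal{O}(X)$ is a locale map with left adjoint $f_! (U) = f[U]$, and open-irreducibility says $f_!$ preserves binary meets (as noted in the paragraph before the theorem). Define $\to_X$ on $\mathcal{O}(X)$ by the recipe of Theorem \ref{OpenFromOpen}(i) applied to $\mathcal{B} = \mathcal{O}(Y)$, $\to_{\mathcal{B}} = \to_Y$, $f := f_!$ and $g := f^{-1}$: that is, $A \to_X B = f^{-1}(f_![A] \to_Y f_![B])$. By Theorem \ref{OpenFromOpen}(i) this is an open implication, provided $f_!$ is binary-meet preserving (it is) and $f^{-1} f_!(A) \geq A$ for all $A$ (true, since $f^{-1}f_![A] = f^{-1}f[A] \supseteq A$), and using that $f^{-1}$ preserves finite meets. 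It remains to check $f^{-1}(U \to_Y V) = f^{-1}(U) \to_X f^{-1}(V)$, i.e. $f^{-1}(U \to_Y V) = f^{-1}\bigl(f_![f^{-1}U] \to_Y f_![f^{-1}V]\bigr)$. Here I would use the unit/counit identities: $f_![f^{-1}(U)] = f[f^{-1}(U)] = U \cap f[X]$, and then invoke openness of $\to_Y$ together with open localizability — equivalently, I expect to need that $\to_Y$, being open, behaves well under intersecting with the open set $f[X]$, so that replacing $U, V$ by $U \cap f[X], V \cap f[X]$ does not change $(U \to_Y V) \cap f[X]$, hence does not change its $f^{-1}$-preimage. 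This last point is exactly where $(v)$ is used in its localizable guise, so in the writeup I may route $(v) \Rightarrow (iv) \Rightarrow (i)$ or prove $(v)\Rightarrow(iv)$ as a lemma first.

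For $(iv) \Rightarrow (v)$, the natural move is to choose cleverly small test opens $Z$ witnessing failure of openness. Suppose $\to_Y$ is not open; by Lemma \ref{PropertiesOfOpenAndClosed}(i) there exist $U, V$ with $V \not\subseteq (U \to_Y U\cap V)$, so pick a point $p \in V \setminus (U \to_Y U\cap V)$. I would like a point witnessing the obstruction and then localize to an open containing it on which the implication is forced to be trivial-ish, deriving a contradiction with localizability — the idea being that on a suitable open $Z$ around $p$ one can arrange $U \cap Z$ and $(U \cap V) \cap Z$ to "look the same" (e.g. $Z \subseteq $ the relevant set) while $(U \to_Y U \cap V) \cap Z$ and $(U \to_Y U) \cap Z$ differ, contradicting open localizability with $U_1 = U, V_1 = U\cap V, U_2 = U, V_2 = U$ once we have also arranged $V_1 \cap Z = V_2 \cap Z$. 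Making this choice of $Z$ work is the main obstacle: the definition of open localizability only lets us compare $(U_1 \to V_1)\cap Z$ with $(U_2 \to V_2)\cap Z$ when the inputs agree on $Z$, so I must engineer a $Z$ on which $U \cap V$ and $U$ (or some other pair realizing the openness inequality) coincide yet the outputs visibly disagree at $p$. I expect this to require the full strength of "open subset" (not just any sublattice) — i.e. using that $Y$ has enough opens, perhaps passing through the Prime Filter Theorem (Theorem \ref{PrimeFilterTheorem}) if $Y$ is built from a distributive lattice, or more likely a direct point-set argument isolating $p$. Finally, the closed case is dual throughout: replace $f_!$ by $f_* (U) = f[U^c]^c$ (the right adjoint of $f^{-1}$ for closed $f$), use Theorem \ref{OpenFromOpen}(ii) and Lemma \ref{PropertiesOfOpenAndClosed}(ii), and dualize "open localizable" to the statement about $(U_1 \to V_1) \cup Z$ over closed complements $Z$.
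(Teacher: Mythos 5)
Your overall route is the paper's: the cycle $(i)\Rightarrow(ii)\Rightarrow(iii)\Rightarrow(iv)\Rightarrow(v)\Rightarrow(i)$, with $(iii)\Rightarrow(iv)$ obtained by applying $(iii)$ to the open inclusion $Z\hookrightarrow Y$, and $(v)\Rightarrow(i)$ via $U'\to_X V'=f^{-1}(f[U']\to_Y f[V'])$ justified by Theorem \ref{OpenFromOpen}$(i)$. Your deferred sub-lemma for the identity $f^{-1}(U)\to_X f^{-1}(V)=f^{-1}(U\to_Y V)$ is also sound and is exactly what the paper does: if $U_1\cap Z=U_2\cap Z$ and $V_1\cap Z=V_2\cap Z$ for an open $Z$ (the paper uses $Z=f[X]$), then openness gives $Z\subseteq (U_2\to_Y U_1)\cap(V_1\to_Y V_2)$, and two applications of internal transitivity yield $Z\cap(U_1\to_Y V_1)\subseteq U_2\to_Y V_2$ and symmetrically; this is your "$(v)\Rightarrow(iv)$ as a lemma". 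You should, however, actually record that argument, since it is internal transitivity (not anything resembling localizability assumed in advance) that makes it work.

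The genuine gap is $(iv)\Rightarrow(v)$, which you leave open, calling the choice of $Z$ ``the main obstacle'' and speculating about the Prime Filter Theorem or a point-set argument isolating a witness $p$. None of that is needed, and no contradiction argument either: to show $V\subseteq U\to_Y(U\cap V)$ (which suffices by Lemma \ref{PropertiesOfOpenAndClosed}$(i)$), simply take $Z=V$, $U_1=U_2=U$, $V_1=U\cap V$, $V_2=U$. Then $U_1\cap Z=U_2\cap Z$ trivially and $V_1\cap Z=U\cap V=V_2\cap Z$, so $(iv)$ gives $(U\to_Y (U\cap V))\cap V=(U\to_Y U)\cap V=V$, i.e.\ $V\subseteq U\to_Y(U\cap V)$. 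You had already identified the correct test tuple and even noted ``$Z\subseteq$ the relevant set''; the relevant set is just $V$ itself, so the step you flagged as the hard part is a one-line computation. The closed case is handled identically with the closed set $Z=U^c$, $U_1=U\cup V$, $U_2=V$, $V_1=V_2=V$, giving $U^c\subseteq (U\cup V)\to_Y V$; note that in the closed version of $(iv)$ the set $Z$ ranges over closed subsets but the condition is still stated via intersection with $Z$, which is equivalent to your union-with-complement formulation.
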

\begin{proof}
The parts $(i)$ to $(ii)$ and $(ii)$ to $(iii)$ are trivial. For $(iii)$ to $(iv)$, as $Z$ is open in $Y$, the inclusion map $j: Z \to Y$ is an open embedding. Therefore, we can apply $(iii)$ to $j$. Then, as $j^{-1}(U_1)=U_1 \cap Z=U_2\cap Z=j^{-1}(U_2)$ and $j^{-1}(V_1)=V_1 \cap Z=V_2\cap Z=j^{-1}(V_2)$, we have $j^{-1}(U_1) \to_Z j^{-1}(V_1)=j^{-1}(U_2) \to_Z j^{-1}(V_2)$. Therefore, $j^{-1}(U_1 \to_Y V_1)=j^{-1}(U_2 \to_Y V_2)$ and hence, $(U_1 \to_Y V_1) \cap Z=(U_2 \to_Y V_2) \cap Z$. \\
For $(iv)$ to $(v)$, using Lemma \ref{PropertiesOfOpenAndClosed}, it is enough to prove $V \subseteq U \to_Y U \cap V$, for any $U, V \in \mathcal{O}(Y)$. Set $Z=V$ in $(iv)$. As $Z \cap (U \cap V)=Z \cap U$, we have $(U \to_Y U \cap V) \cap Z=(U \to_Y U) \cap Z$, by $(iv)$. As $\to_Y$ is an implication, we have $U \to_Y U=Y$ and hence $V=Z \subseteq (U \to_Y U \cap V)$.\\
Finally, to prove $(i)$ from $(v)$, define $U' \to_X V'=f^{-1}(f[U'] \to_Y f[V'])$, for any $U', V' \in \mathcal{O}(X)$. First, note that as $f$ is open, $\to_X$ is well-defined over $\mathcal{O}(X)$. Secondly, by Example \ref{ExampleOfImplications}, $\to_X$ is clearly an implication. Thirdly, recall that $f_!(U)=f[U]$ is the left adjoint of $f^{-1}$ and preserves all binary meets. Hence, as $\to_Y$ is open, by Theorem \ref{OpenFromOpen}, the implication $\to_X$ is open. \\
Finally, to prove $f^{-1}(U) \to_X f^{-1}(V)=f^{-1}(U \to_Y V)$, for any $U, V \in \mathcal{O}(Y)$, we first prove that $f^{-1}(U_1)=f^{-1}(U_2)$ and $f^{-1}(V_1)=f^{-1}(V_2)$ imply $f^{-1}(U_1 \to_Y V_1)=f^{-1}(U_2 \to_Y V_2)$, for any opens $U_1, V_1, U_2, V_2 \in \mathcal{O}(Y)$. First, notice that by the assumptions $f^{-1}(U_1)=f^{-1}(U_2)$ and $f^{-1}(V_1)=f^{-1}(V_2)$, we reach $U_1 \cap f[X]=U_2 \cap f[X]$ and $V_1 \cap f[X]=V_2 \cap f[X]$. As $f[X]$ is open, $\to_Y$ is an open implication, and $U_2 \cap f[X] \subseteq U_1$ and $V_1 \cap f[X] \subseteq V_2$, we have $f[X] \subseteq (U_2 \to_Y U_1) \cap (V_1 \to_Y V_2)$. Therefore, $f[X] \cap (U_1 \to_Y V_1) \subseteq (U_2 \to_Y U_1) \cap (U_1 \to_Y V_1) \cap (V_1 \to_Y V_2) \subseteq U_2 \to_Y V_2$. By symmetry, we also have $f[X] \cap (U_2 \to_Y V_2) \subseteq U_1 \to_Y V_1$. Therefore, $f[X] \cap (U_1 \to_Y V_1)=f[X] \cap (U_2 \to_Y V_2)$ which implies $f^{-1}(U_1 \to_Y V_1)=f^{-1}(U_2 \to_Y V_2)$.\\
Now, as $f_! \dashv f^{-1}$, we have $f^{-1}f_!f^{-1}(U)=f^{-1}(U)$ and $f^{-1}f_!f^{-1}(V)=f^{-1}(V)$, for any $U, V \in \mathcal{O}(Y)$. Therefore, using what we just proved, we have $f^{-1}(U) \to_X f^{-1}(V)=f^{-1}(f_!f^{-1}(U) \to_Y f_!f^{-1}(V))=f^{-1}(U \to_Y V)$.\\
For the closed case, the only non-trivial cases are $(iv)$ to $(v)$ and $(v)$ to $(i)$. For the first case, using Lemma \ref{PropertiesOfOpenAndClosed}, it is enough to show $(U \cup V \to_Y V) \cup U=Y$ or equivalently $U^c \subseteq U \cup V \to_Y V$, 
for any $U, V \in \mathcal{O}(Y)$. Set $Z=U^c$ in $(iv)$ and note that $Z$ is closed in $Y$. As $(U \cup V) \cap Z=V \cap Z$, we have $(U \cup V \to_Y V) \cap Z=(V \to_Y V) \cap Z$, by $(iv)$. As $\to_Y$ is an implication, we have $V \to_Y V=Y$ and hence $U^c=Z \subseteq (U \cup V \to_Y V)$. Therefore, $U^c \subseteq U \cup V \to_Y V$.\\
To prove $(i)$ from $(v)$, define $U' \to_X V'=f^{-1}(f_*(U') \to_Y f_*(V'))$, for any $U', V'\in \mathcal{O}(X)$, where $f_*(W)=(f[W^c])^c$, for any $W \in \mathcal{O}(X)$. By Example \ref{ExampleOfImplications}, $\to_X$ is clearly an implication. As $f^{-1} \dashv f_*$, the implication $\to_Y$ is closed and $f_*$ preserves all binary joins, by Theorem \ref{OpenFromOpen},  $\to_X$ is also closed. To prove $f^{-1}(U \to_Y V)=f^{-1}(U) \to_X f^{-1}(V)$, we first prove that $f^{-1}(U_1)=f^{-1}(U_2)$ and $f^{-1}(V_1)=f^{-1}(V_2)$ imply $f^{-1}(U_1 \to_Y V_1)=f^{-1}(U_2 \to_Y V_2)$, for any $U_1, V_1, U_2, V_2 \in \mathcal{O}(Y)$. As before, by $f^{-1}(U_1)=f^{-1}(U_2)$ and $f^{-1}(V_1)=f^{-1}(V_2)$, we reach $U_1 \cap f[X]=U_2 \cap f[X]$ and $V_1 \cap f[X]=V_2 \cap f[X]$. Therefore, $U_2 \subseteq f[X]^c \cup U_1$ and $V_1 \subseteq f[X]^c \cup V_2$. As $f[X]^c$ is open and the implication $\to_Y$ is closed, we have 
$
f[X]^c \cup (U_2 \to_Y U_1)=f[X]^c \cup (V_1 \to_Y V_2)=Y$. Hence, 
$
U_1 \to_Y V_1 \subseteq f[X]^c \cup (U_1 \to_Y V_1)$ which is itself a subset of
\[
(f[X]^c \cup (U_1 \to_Y V_1)) \cap (f[X]^c \cup (U_2 \to_Y U_1)) \cap (f[X]^c \cup (V_1 \to_Y V_2)])=
\]
\[
f[X]^c \cup [(U_2 \to_Y U_1) \cap (U_1 \to_Y V_1) \cap (V_1 \to_Y V_2)] \subseteq f[X]^c \cup (U_2 \to_Y V_2).
\]
Thus, $U_1 \to_Y V_1 \subseteq f[X]^c \cup (U_2 \to_Y V_2) $. By symmetry, $U_2 \to_Y V_2 \subseteq f[X]^c \cup (U_1 \to_Y V_1) $. Therefore, $f[X] \cap (U_1 \to_Y V_1)= f[X] \cap (U_2 \to_Y V_2) $ which implies $f^{-1}(U_1 \to_Y V_1)=f^{-1}(U_2 \to_Y V_2)$.\\
Finally, as $f^{-1} \dashv f_*$, we have the equalities $f^{-1}f_*f^{-1}(U)=f^{-1}(U)$ and $f^{-1}f_*f^{-1}(V)=f^{-1}(V)$, for any $U, V \in \mathcal{O}(Y)$. Therefore, 
\[
f^{-1}(U) \to_X f^{-1}(V)=f^{-1}(f_*f^{-1}(U) \to_Y f_*f^{-1}(V))
\]
which is equal to $f^{-1}(U \to_Y V)$,
for any $U, V \in \mathcal{O}(Y)$.
\end{proof}

\section{Geometric Implications}
In this section, we will introduce a natural notion of geometricity for a category of strong spaces over a category of spaces to formalize the informal concept of geometricity of a family of implications over a family of continuous maps. Then, we will provide a characterization theorem for the geometric categories over the categories of spaces satisfying some basic closure properties. Specifically, we will see that the only geometric category over $\mathbf{Top}$ is the category of the strong spaces with the trivial implication. 
We first need the following lemma.
\begin{lemma}\label{Lifting}
\begin{description}
\item[$(i)$]
Let $(\mathcal{A}, \to_{\mathcal{A}})$ be a strong algebra, $\mathcal{B}$ be a bounded distributive lattice and $f: \mathcal{A} \to \mathcal{B}$ be a bounded lattice map with a left inverse, i.e, a monotone function $g: \mathcal{B} \to \mathcal{A}$ such that $gf=id_{\mathcal{A}}$. Then, there is an implication $\to_{\mathcal{B}}$ over $\mathcal{B}$ such that $f: (\mathcal{A}, \to_{\mathcal{A}}) \to (\mathcal{B}, \to_{\mathcal{B}})$ is a strong algebra map.
\item[$(ii)$]
Let $(\mathcal{A}, \to_{\mathcal{A}})$ be a weakly Boolean strong algebra, $\mathcal{B}$ be a bounded distributive lattice and $f: \mathcal{A} \to \mathcal{B}$ be a surjective bounded distributive lattice morphism. Then, there is a unique weakly Boolean implication $\to_{\mathcal{B}}$ over $\mathcal{B}$ such that $f: (\mathcal{A}, \to_{\mathcal{A}}) \to (\mathcal{B}, \to_{\mathcal{B}})$ is a strong algebra map. If $\mathcal{B}$ is Boolean, it holds even without the surjectivity condition.
\end{description}
\end{lemma}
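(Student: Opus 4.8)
For part $(i)$, the plan is to reduce directly to the construction of Example \ref{ExampleOfImplications}, swapping the roles of the two maps. In that example the strong algebra sits on the target side of the order-preserving map and on the source side of the finite-meet-preserving map; so, applying it with $g : \mathcal{B} \to \mathcal{A}$ in the role of the order-preserving map (it is monotone by hypothesis) and $f : \mathcal{A} \to \mathcal{B}$ in the role of the finite-meet-preserving map (a bounded lattice map, hence finite-meet preserving), we get that $x \to_{\mathcal{B}} y = f\bigl(g(x) \to_{\mathcal{A}} g(y)\bigr)$ is an implication over $\mathcal{B}$. It then only remains to note that $f$ is a strong algebra map, which is immediate from $gf = id_{\mathcal{A}}$: indeed $f(a) \to_{\mathcal{B}} f(b) = f\bigl(gf(a) \to_{\mathcal{A}} gf(b)\bigr) = f(a \to_{\mathcal{A}} b)$, and $f$ is a bounded lattice map by assumption.

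For part $(ii)$, the plan is to use Theorem \ref{CharacOfWBI}, together with the observation that a weakly Boolean implication over a fixed bounded distributive lattice is determined by its core (by the theorem it is $a \to b = \neg a \vee b$ with $\neg a$ the complement of $a \vee \neg 1$ in $[\neg 1, 1]$, and complements in a distributive lattice are unique). Set $m_{\mathcal{A}} = \neg_{\mathcal{A}} 1$ and $m_{\mathcal{B}} = f(m_{\mathcal{A}})$. The first real step is to check that $[m_{\mathcal{B}}, 1_{\mathcal{B}}]$ is a Boolean algebra: by surjectivity any $x \geq m_{\mathcal{B}}$ equals $f(a \vee m_{\mathcal{A}})$ with $a \vee m_{\mathcal{A}} \in [m_{\mathcal{A}}, 1_{\mathcal{A}}]$, so $f$ restricts to a surjective bounded lattice homomorphism of the Boolean algebra $[m_{\mathcal{A}}, 1_{\mathcal{A}}]$ onto $[m_{\mathcal{B}}, 1_{\mathcal{B}}]$, and if $\bar{a}$ denotes the complement of $a$ in $[m_{\mathcal{A}}, 1_{\mathcal{A}}]$ then $f(\bar{a})$ is a complement of $f(a)$ in $[m_{\mathcal{B}}, 1_{\mathcal{B}}]$, so this interval is a bounded distributive lattice with complements, i.e.\ a Boolean algebra. (When $\mathcal{B}$ is Boolean, $[m_{\mathcal{B}}, 1_{\mathcal{B}}]$ is a Boolean algebra outright, and this is the only point where surjectivity is used.) Applying the converse half of Theorem \ref{CharacOfWBI} to $\mathcal{B}$ with the element $m_{\mathcal{B}}$ then yields the weakly Boolean implication $x \to_{\mathcal{B}} y = n_{\mathcal{B}}(x) \vee y$, where $n_{\mathcal{B}}(x)$ is the complement of $x \vee m_{\mathcal{B}}$ in $[m_{\mathcal{B}}, 1_{\mathcal{B}}]$.

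It then remains to verify that $f$ is a strong algebra map and that $\to_{\mathcal{B}}$ is unique. For the former, $\neg_{\mathcal{A}} a \in [m_{\mathcal{A}}, 1_{\mathcal{A}}]$ satisfies $(\neg_{\mathcal{A}} a) \vee (a \vee m_{\mathcal{A}}) = 1_{\mathcal{A}}$ and $(\neg_{\mathcal{A}} a) \wedge (a \vee m_{\mathcal{A}}) = m_{\mathcal{A}}$, so applying the lattice homomorphism $f$ shows that $f(\neg_{\mathcal{A}} a)$ is the complement of $f(a) \vee m_{\mathcal{B}}$ in $[m_{\mathcal{B}}, 1_{\mathcal{B}}]$, i.e.\ $f(\neg_{\mathcal{A}} a) = n_{\mathcal{B}}(f(a))$; hence $f(a \to_{\mathcal{A}} b) = f(\neg_{\mathcal{A}} a) \vee f(b) = n_{\mathcal{B}}(f(a)) \vee f(b) = f(a) \to_{\mathcal{B}} f(b)$. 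For uniqueness, any weakly Boolean $\to_{\mathcal{B}}'$ making $f$ a strong algebra map must have core $1_{\mathcal{B}} \to_{\mathcal{B}}' 0_{\mathcal{B}} = f(1_{\mathcal{A}} \to_{\mathcal{A}} 0_{\mathcal{A}}) = f(m_{\mathcal{A}}) = m_{\mathcal{B}}$, and by the core-determinacy observation this forces $\to_{\mathcal{B}}' = \to_{\mathcal{B}}$; the Boolean case is covered by exactly the same two computations. The step I expect to be the main (if mild) obstacle is the surjectivity claim $f\bigl([m_{\mathcal{A}}, 1_{\mathcal{A}}]\bigr) = [m_{\mathcal{B}}, 1_{\mathcal{B}}]$, since it is precisely this that lets us transport Booleanness along $f$; everything else is a direct appeal to Example \ref{ExampleOfImplications} and Theorem \ref{CharacOfWBI}.
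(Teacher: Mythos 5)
Your proposal is correct and follows essentially the same route as the paper: part $(i)$ is the identical appeal to Example \ref{ExampleOfImplications} with $c \to_{\mathcal{B}} d = f(g(c) \to_{\mathcal{A}} g(d))$, and part $(ii)$ is the same use of Theorem \ref{CharacOfWBI}, transporting the complement structure of $[\neg_{\mathcal{A}}1, 1_{\mathcal{A}}]$ along $f$ and splitting into the surjective and Boolean cases exactly as the paper does. The only cosmetic differences are that you package the Booleanness of $[m_{\mathcal{B}}, 1_{\mathcal{B}}]$ as the surjective lattice-homomorphic image of a Boolean algebra (the paper exhibits the complements directly, which is the same computation) and phrase uniqueness via determinacy by the core rather than by the negation.
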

\begin{proof}
For $(i)$, define $c \to_{\mathcal{B}} d=f(g(c) \to_{\mathcal{A}} g(d))$, for any $c, d \in \mathcal{B}$, where $g: \mathcal{B} \to \mathcal{A}$ is the left inverse of $f$. By Example \ref{ExampleOfImplications}, the operator $\to_{\mathcal{B}}$ is an implication. To prove that $f$ is a strong algebra map, note that $f(a) \to_{\mathcal{B}} f(b)=f(gf(a) \to_{\mathcal{A}} gf(b))$. As $gf=id_{\mathcal{A}}$, we reach the conclusion.\\
For $(ii)$, as $\to_{\mathcal{A}}$ is weakly Boolean, by Theorem \ref{CharacOfWBI}, the interval $[\neg_{\mathcal{A}} 1, 1]$ is a Boolean algebra, $a \to_{\mathcal{A}} b=\neg_{\mathcal{A}} a \vee b$ and $\neg_{\mathcal{A}} a$ is the complement of $a \vee \neg_{\mathcal{A}} 1$ in $[\neg_{\mathcal{A}} 1, 1]$.
Set $n=f(\neg_{\mathcal{A}} 1)$. Note that $f(\neg_{\mathcal{A}} a)$ is the complement of $f(a) \vee n$ in $[n, 1_{\mathcal{B}}]$, for any $a \in \mathcal{A}$, because
$f(a) \vee n \vee f(\neg_{\mathcal{A}} a)=f(a \vee \neg_{\mathcal{A}} a \vee \neg_{\mathcal{A}} 1)=f(1_{\mathcal{A}})=1_{\mathcal{B}}$ and
$(f(a) \vee n) \wedge f(\neg_{\mathcal{A}} a)=f(a \vee \neg_{\mathcal{A}} 1) \wedge f(\neg_{\mathcal{A}} a)= f((a \vee \neg_{\mathcal{A}} 1) \wedge \neg_{\mathcal{A}} a)=f(\neg_{\mathcal{A}} 1)=n$. 
Now, to show that the interval $[n, 1_{\mathcal{B}}]$ in $\mathcal{B}$ is a Boolean algebra, we use either the surjectivity of $f$ or the Booleanness of $\mathcal{B}$. If $f$ is surjective, then every $c \in \mathcal{B}$ is equal to $f(a)$, for some $a \in \mathcal{A}$. Hence, if $c=f(a) \geq n$, as $f(\neg_{\mathcal{A}} a)$ is the complement of $f(a) \vee n=f(a)$ in $[n, 1_{\mathcal{B}}]$, then any $c \geq n$ has a complement in $[n, 1_{\mathcal{B}}]$. If $\mathcal{B}$ is Boolean, then the complement of $c \geq n$ in $[n, 1_{\mathcal{B}}]$ is $\bar{c} \vee n$, where $\bar{c}$ is the complement of $c$ in $
\mathcal{B}$. Now, by Theorem \ref{CharacOfWBI}, the operation 
$c \to_{\mathcal{B}} d=n(c) \vee d$ is a WBI over $\mathcal{B}$, where $n(c)$ is the complement of $c \vee n$ in $[n, 1_{\mathcal{B}}]$. 
Note that we proved $n(f(a))=f(\neg_{\mathcal{A}} a)$. Hence, as $\neg_{\mathcal{B}} f(a)=n(f(a))$, we reach $f(a \to_{\mathcal{A}} b)=f(\neg_{\mathcal{A}} a \vee b)=f(\neg_{\mathcal{A}} a) \vee b=\neg_{\mathcal{B}} f(a) \vee f(b)=f(a) \to_{\mathcal{B}} f(b)$.
For uniqueness, let $\to$ be another WBI over $\mathcal{B}$ such that $f(a \to_{\mathcal{A}} b)=f(a) \to f(b)$, for any $a, b \in \mathcal{A}$. Then, $\neg 1_{\mathcal{B}}=f(\neg_{\mathcal{A}}1)=n$. As $\neg c$ and $\neg_{\mathcal{B}} c$ are both the complements of $c \vee n$ in $[n, 1_{\mathcal{B}}]$, they must be equal. As a WBI is determined uniquely by its negation, we have  $\to_{\mathcal{B}}=\, \to$.
\end{proof}

As geometricity is the stability of a family of implications under the inverse image of a family of continuous functions, to formalize this notion, we need to be precise about two ingredients: the continuous maps we use and the family of implications we choose. For the former, it is reasonable to start with a subcategory $\mathcal{S}$ of $\mathbf{Top}$ to have a relative version of geometricity. For the latter, as any implication must be over a space in this case, a natural formalization of a family of implications is some sort of fibration that to each space $X$ in $\mathcal{S}$ assigns a fiber of strong spaces over $X$. Having these two ingredients fixed, the geometricity simply means the stability of the fibres under the inverse image of the maps in $\mathcal{S}$. In other words, it states that for any map $f: X \to Y$ in $\mathcal{S}$, the inverse image map $f^{-1}$ must map a fiber over $Y$ into the fiber over $X$. The following is the formalization of this idea.

\begin{definition}
Let $\mathcal{S}$ be a (not necessarily full) subcategory  of $\mathbf{Top}$. A category $\mathcal{C}$ of strong spaces is called \emph{geometric over $\mathcal{S}$}, if the forgetful functor $U: \mathcal{C} \to \mathbf{Top}$ mapping $(X, \to_X)$ to $X$ has the following conditions:
\begin{description}
\item[$(i)$]
$U$ maps $\mathcal{C}$ to $\mathcal{S}$ and it is surjective on the objects of $\mathcal{S}$, and
\item[$(ii)$]
for any object $(Y, \to_Y)$ in $\mathcal{C}$, any object $X$ in $\mathcal{S}$ and any map $f: X \to Y=U(Y, \to_Y)$ in $\mathcal{S}$, there exists an object $(X, \to_X)$ in $\mathcal{C}$ such that $f$ induces a strong space map $f: (X, \to_X) \to (Y, \to_Y)$ in $\mathcal{C}$:
\begin{center}
    
\small \begin{tikzcd}
	{\mathcal{C}} && {(X, \to_X)} && {(Y, \to_Y)} \\
	&& {} && {} \\
	&& {} && {} \\
	{\mathcal{S}} && X && {Y=U(Y, \to_Y)}
	\arrow["U"', from=1-1, to=4-1]
	\arrow["f"', from=4-3, to=4-5]
	\arrow["f", from=1-3, to=1-5]
	\arrow[squiggly, from=2-5, to=3-5]
	\arrow[squiggly, from=2-3, to=3-3]
\end{tikzcd}
\end{center}
\end{description}
\end{definition}

Let us explain the connection between the definition and the previous discussion we had. Using the functor $U$, the category $\mathcal{C}$ is nothing but a way to provide a fiber of strong spaces or equivalently a fiber of implications over any space in $\mathcal{S}$. The condition $(i)$, then, demands that the fibers and the maps between them are all lying over $\mathcal{S}$ and none of the fibers are empty. The condition $(ii)$ is the geometricity condition that states that for any map $f: X \to Y$, its inverse image can pull back any implication over $Y$ to an implication over $X$. In other words, the fibers are stable under the inverse image of the maps of $\mathcal{S}$.\\ 
Note that if $\mathcal{C}$ is geometric over $\mathcal{S}$, then the maps of $\mathcal{C}$ and $\mathcal{S}$ are the same. One direction is easy as $U$ maps $\mathcal{C}$ into $\mathcal{S}$. For the other direction, notice that for any map $f: X \to Y$, as the fiber over $Y$ is non-empty, there exists an object $(Y, \to_Y)$ in $\mathcal{C}$. Now, use $(ii)$ to show that $f$ actually lives in $\mathcal{C}$. This implies that to show the equality of two geometric categories over a fixed category, it is enough to show that their objects are the same.
\begin{example}\label{ExamplesOfGeometricCats}
For any category $\mathcal{S}$ of spaces, let $\mathcal{S}_t$ be the category of strong spaces $(X, \to_t)$, where $X$ is in $\mathcal{S}$ and $\to_t$ is the trivial implication together with all the maps of $\mathcal{S}$ as the morphisms. It is well defined as the inverse image of any map $f: X \to Y$ in $\mathcal{S}$ preserves the implication, since $f^{-1}(Y)=X$. It is clear that $\mathcal{S}_t$ is a geometric category over $\mathcal{S}$. This category is called the \emph{trivial} geometric category over $\mathcal{S}$.

If $\mathcal{S}$ only consists of locally indiscrete spaces, then there are three other degenerate geometric categories over $\mathcal{S}$. The first is the category $\mathcal{S}_b$ of strong spaces $(X, \to_b)$, where $X$ is in $\mathcal{S}$ and $\to_b$ is the Boolean implication together with the maps of $\mathcal{S}$ as the morphisms. This category is well defined, since the locally indiscreteness of $X$ implies the Booleanness of $\mathcal{O}(X)$ and the inverse images always preserve all the Boolean operators. It is easy to see that $\mathcal{S}_b$ is actually geometric over $\mathcal{S}$. The second example is the union of $\mathcal{S}_b$ and $\mathcal{S}_t$ that we denote by $\mathcal{S}_{bt}$. This category is also clearly geometric over $\mathcal{S}$. The third example is $\mathcal{S}_a$, the subcategory of strong spaces $(X, \to)$, where $X$ is in $\mathcal{S}$ and $\to$ is a WBI, together with the strong space morphisms that $U$ maps into $\mathcal{S}$. The fibers are clearly non-empty, because of the existence of the trivial implication over any space. It is geometric by part $(ii)$ of Lemma \ref{Lifting}.

To have a non-degenerate example, note that the category of all strong spaces $(X, \to)$, where $\to$ is open (closed) together with the strong maps that are open-irreducible (closed-irreducible) is geometric over the category $\mathbf{OI}$ (resp. $\mathbf{CI}$), by Theorem \ref{PreservationUnderEmbedding}. Note that Theorem \ref{PreservationUnderEmbedding} actually proves that these categories are the greatest geometric categories over $\mathbf{OI}$ and $\mathbf{CI}$, respectively, because if the inverse image of any open-irreducible (closed-irreducible) map into $X$ maps an implication $\to$ over $\mathcal{O}(X)$ to another implication, then $\to$ is open (resp. closed). Finally, it is worth mentioning that the category of all strong spaces with the strong space morphisms that are also surjective is geometric over the category of all spaces with all surjections, $\mathbf{Srj}$. To prove, note that if $f: X \to Y$ is a continuous surjection, then $f^{-1}$ has a left inverse that is also monotone. Now, use part $(i)$ of Lemma \ref{Lifting}.
\end{example}
To provide a characterization for all geometric categories over a given category $\mathcal{S}$, it is reasonable to assume some closure properties for  $\mathcal{S}$.
\begin{definition}
A subcategory $\mathcal{S}$ of $\mathbf{Top}$ is called \emph{local} if it has at least one non-empty object and it is closed under all embeddings, i.e., for any space $X$ in $\mathcal{S}$ and any embedding $f: Y \to X$, both $Y$ and $f$ belongs to $\mathcal{S}$. A space $X$ is called \emph{full} in $\mathcal{S}$ if $\mathcal{S}$ has $X$ as an object and all maps $f: Y \to X$ as maps, for any object $Y$ in $\mathcal{S}$. 
\end{definition}

First, note that a local subcategory $\mathcal{S}$ of $\mathbf{Top}$ has all singleton spaces as its objects. Because, as $\mathcal{S}$ has a non-empty space $X$, it is possible to pick $x \in X$ and set $f: \{*\} \to X$ as a function mapping $*$ to $x$. As $f$ is an embedding and $\mathcal{S}$ is local, the space $\{*\}$ is in $\mathcal{S}$. Secondly, notice that although the singletons are all present in any local subcategory $\mathcal{S}$ of $\mathbf{Top}$, it does not necessarily mean that $\mathcal{S}$ has a terminal object. To have a counter-example, it is enough to set $\mathcal{S}$ as the subcategory of all embeddings in $\mathbf{Top}$. $\mathcal{S}$ is trivially local. However, if $T$ is its terminal object, then it must be a singleton, as otherwise, there are either two or none embeddings from $\{*\}$ to $T$. However, if $T$ is a singleton, then there is no embedding from the discrete space $\{*, \dagger\}$ with two points into $T$ which is a contradiction. \\
Using an argument similar to what we had above, it is not hard to see that a terminal object in a local subcategory of $\mathbf{Top}$, if exists, must be a singleton space.
Moreover, a singleton space is a terminal object iff it is full in the subcategory.

\begin{theorem}\label{MainCharacterization}
Let $\mathcal{S}$ be a local subcategory of $\mathbf{Top}$ with a terminal object:
\begin{description}
\item[$(i)$]
If $\mathcal{S}$ has at least one non-locally-indiscrete space, then the only geometric category over $\mathcal{S}$ is $\mathcal{S}_t$.
\item[$(ii)$]
If $\mathcal{S}$ only consists of locally-indiscrete spaces, includes a non-indiscrete space and a full discrete space with two points, then the only geometric categories over $\mathcal{S}$ are the four distinct categories $\mathcal{S}_t$, $\mathcal{S}_b$, $\mathcal{S}_{bt}$ and $\mathcal{S}_a$. 
\item[$(iii)$]
If $\mathcal{S}$ only consists of indiscrete spaces, then the only geometric categories over $\mathcal{S}$ are the three distinct categories $\mathcal{S}_t$, $\mathcal{S}_b$, and $\mathcal{S}_{bt}$.
\end{description}
\end{theorem}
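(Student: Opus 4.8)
The plan is to prove all three parts by the same strategy: since a geometric category over $\mathcal{S}$ is determined by its objects (as noted after the definition), it suffices to determine, for each space $X$ in $\mathcal{S}$, which implications $\to$ over $\mathcal{O}(X)$ can appear in a geometric category $\mathcal{C}$. The key leverage comes from the terminal object: write $T=\{*\}$ for the terminal singleton (which is full in $\mathcal{S}$), and for each point $x$ of each space $X$ let $x\colon T\to X$ be the corresponding map, which lies in $\mathcal{S}$ since $T$ is full. Geometricity then forces $x^{-1}$ to send the implication $\to_X$ of any object $(X,\to_X)\in\mathcal{C}$ to an implication over $\mathcal{O}(T)=\{0\le 1\}$; but the only implication on the two-element lattice is the trivial one, so $x^{-1}(U\to_X V)=1$ for all $U,V$, i.e. every point of $X$ lies in $U\to_X V$, i.e. $U\to_X V=X$. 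Hence the only implication over $\mathcal{O}(X)$ admissible for \emph{any} geometric $\mathcal{C}$, as soon as $T$ is present and full, is the trivial one --- \emph{unless} $\mathcal{O}(X)$ itself has fewer than, or exactly, the discriminating power needed. So the real content is: for which $X$ does a non-trivial implication survive the point-restriction obstruction, and the answer is governed by whether $X$ is $T_0$-like versus locally indiscrete versus indiscrete.

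For part $(i)$: take a non-locally-indiscrete space $Y$ in $\mathcal{S}$. By Lemma~\ref{Well-definedCore} (applied with $M=\varnothing$) and the surrounding discussion, there is a closed set $K\subseteq Y$ that is not open, equivalently $Y$ is not locally indiscrete. First I would show that the fiber over such $Y$ can contain only $(Y,\to_t)$: given $(Y,\to_Y)\in\mathcal{C}$, restricting along every point map $*\mapsto y$ forces $U\to_Y V=Y$ for all opens $U,V$ exactly as above, so $\to_Y=\to_t$. Next, for an \emph{arbitrary} space $X$ in $\mathcal{S}$ and an object $(X,\to_X)\in\mathcal{C}$, the same point-restriction argument shows $\to_X=\to_t$ directly --- no non-locally-indiscrete witness is even needed for this step, since $T$ is full in $\mathcal{S}$ and hence available over every $X$. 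Thus $\mathcal{C}=\mathcal{S}_t$; conversely $\mathcal{S}_t$ is geometric by Example~\ref{ExamplesOfGeometricCats}. (The hypothesis ``$\mathcal{S}$ has a non-locally-indiscrete space'' is what rules out parts $(ii)$ and $(iii)$ occurring simultaneously; it is used to exclude the other candidate categories, which cannot live over a non-locally-indiscrete $Y$ by the displayed forcing.)

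For parts $(ii)$ and $(iii)$, the point-restriction argument above is too blunt: when $\mathcal{S}$ consists only of locally indiscrete (resp. indiscrete) spaces, $\mathcal{O}(X)$ is a Boolean algebra and the Boolean implication $\to_b$ is preserved by \emph{all} inverse images, so $\mathcal{S}_b$ (and $\mathcal{S}_{bt}$) are genuine geometric categories, and by Corollary~\ref{TheCor} every WBI over $\mathcal{O}(X)$ has the form $U\to V=U^c\cup V\cup M$ with core $M=\neg X$. The plan is: (a) show every object of a geometric $\mathcal{C}$ is a WBS, using that $\mathcal{C}$ restricted along point maps and embeddings forces the open-localizability condition of Theorem~\ref{PreservationUnderEmbedding}$(iv)$ in both the open and closed versions (embeddings of opens and of closeds are available since $\mathcal{S}$ is local and all subspaces are again locally indiscrete), hence $\to_X$ is both open and closed, i.e. a WBI; (b) use Remark~\ref{TheRem} --- a continuous $f\colon X\to Y$ is a strong map between WBS's iff $f^{-1}(M_Y)=M_X$ --- to analyze which assignments $X\mapsto M_X$ are stable under all maps in $\mathcal{S}$; the full discrete two-point space $D=\{*,\dagger\}$ and the non-indiscrete space supply enough maps (constant maps, and in the discrete case the two points) to force each $M_X$ to be $\varnothing$ uniformly, or $X$ uniformly, or unconstrained, yielding exactly $\mathcal{S}_b$, $\mathcal{S}_t$, $\mathcal{S}_{bt}$, and (in the locally indiscrete case, where non-Boolean locally indiscrete $\mathcal{O}(X)$ can still carry many WBI cores via Lemma~\ref{Lifting}$(ii)$) also $\mathcal{S}_a$; in the purely indiscrete case $\mathcal{O}(X)=\{0\le 1\}$ is so small that only cores $\varnothing$ and $X$ exist, so $\mathcal{S}_a$ collapses into $\mathcal{S}_{bt}$ and we get only three. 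Distinctness of the categories follows by exhibiting, in the ambient $\mathcal{S}$, a space carrying distinct cores (the non-indiscrete space separates $\to_b$ from $\to_t$, and the discrete two-point space separates $\mathcal{S}_{bt}$ from $\mathcal{S}_a$).

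The main obstacle I expect is step (b) of parts $(ii)$--$(iii)$: organizing the core-assignment $X\mapsto M_X$ into a coherent, naturality-style constraint and then squeezing out exactly the four (resp. three) possibilities. The forcing via point maps and the full two-point discrete space must be combined carefully --- one has to check that the maps needed (e.g. an embedding of a non-open closed set, a map witnessing that $X$ is not indiscrete, the two inclusions of $T$ into $D$) genuinely lie in $\mathcal{S}$ using only locality and the stated fullness hypotheses, and that no ``mixed'' assignment (e.g. $M_X=\varnothing$ on some spaces, $M_X=X$ on others) survives. This bookkeeping, rather than any single hard inequality, is where the proof earns its length.
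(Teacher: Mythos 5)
Your argument for part $(i)$ rests on the claim that the only implication over the two-element lattice $\mathcal{O}(\{*\})=\{0\le 1\}$ is the trivial one, and that therefore restricting along every point map $\{*\}\to X$ forces $U\to_X V=X$ for all opens. This claim is false: the two-element lattice carries exactly two implications, the trivial one and the Boolean one (the only undetermined value is $1\to 0$, and both choices satisfy internal reflexivity and transitivity). Indeed the paper relies on precisely this fact in part $(iii)$, where the trivial and Boolean implications over $\{*\}$ are distinguished by their cores $\{*\}$ and $\varnothing$. If the point map pulls $\to_X$ back to the Boolean implication on $\{*\}$, you only learn that $x\in U\to_X V$ iff $x\notin U$ or $x\in V$, which does not force triviality. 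Your argument is also internally inconsistent: you assert that ``no non-locally-indiscrete witness is even needed'' for the forcing step, but then the same forcing would apply verbatim in the settings of parts $(ii)$ and $(iii)$ and would prove that $\mathcal{S}_t$ is the only geometric category there as well, contradicting the statement you are trying to prove (and the genuine geometricity of $\mathcal{S}_b$ that you yourself note).

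The correct route for $(i)$, which is the paper's, is in two steps. First, locality (closure under \emph{all} embeddings, in particular the open and the closed ones) combined with Theorem \ref{PreservationUnderEmbedding} shows that every implication occurring in a geometric $\mathcal{C}$ is both open and closed, hence a WBI with a core $M_Y$ (Corollary \ref{TheCor}), and geometricity translates into the transport law $M_X=f^{-1}(M_Y)$ of Remark \ref{TheRem}. (You do carry out this step, but only for parts $(ii)$ and $(iii)$; it is equally indispensable for $(i)$.) Second, the non-locally-indiscrete space $X$ is used essentially: if some object $(Y,\to_Y)$ had $M_Y\neq Y$, pick $y\in Y-M_Y$ and pull back along the constant map $X\to\{*\}\to Y-M_Y\hookrightarrow Y$ (which lies in $\mathcal{S}$ because the terminal singleton is full and $\mathcal{S}$ is closed under embeddings); this produces a WBI on $X$ with core $\varnothing$, forcing $X$ to be locally indiscrete by Corollary \ref{TheCor} --- a contradiction. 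Your sketch of parts $(ii)$ and $(iii)$ is broadly in the right direction (case analysis on the cores, transport via constant maps and via the full two-point discrete space), though the decisive case split --- either all cores are $\varnothing$ or the whole space, in which case the fiber over $\{*\}$ determines $\mathcal{C}$, or some core is proper and nontrivial, in which case the two-point discrete space lets you realize every open as a core and conclude $\mathcal{C}=\mathcal{S}_a$ --- is only gestured at and would need to be written out.
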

\begin{proof}
We first show that the locality of $\mathcal{S}$ implies that the strong spaces in $\mathcal{C}$ are all WBS. Let $(Y, \to_Y)$ be an object in $\mathcal{C}$. As $\mathcal{S}$ is closed under all embeddings, by geometricity of $\mathcal{C}$ over $\mathcal{S}$, for any embedding $f: X \to Y$, there is a strong space $(X, \to_X)$ such that $f$ induces a strong space map from $(X, \to_X)$ to $(Y, \to_Y)$. Therefore, by Theorem \ref{PreservationUnderEmbedding}, $\to_Y$ is both open and closed and hence a WBI. Now, note that by Corollary \ref{TheCor}, $\to_Y$ has a core. Denote this core by $M_Y$. Then, the geometricity implies that for any $f: X \to Y$ in $\mathcal{S}$, there exists a strong space $(X, \to_X)$ in $\mathcal{C}$ such that $f$ induces the strong space map $f:(X, \to_X) \to (Y, \to_Y)$. Therefore, by Remark \ref{TheRem}, we can conclude that for any $f: X \to Y$ in $\mathcal{S}$, there is a WBI over $X$ in $\mathcal{C}$ with the core $M_X=f^{-1}(M_Y)$. We will use this property frequently in the proof.\\
To prove $(i)$, it is enough to show that $M_Y=Y$, for any $(Y, \to_Y)$ in $\mathcal{C}$. Because, by Remark \ref{TheRem}, any strong space over $Y$ must have the trivial implication and as the fiber is non-empty, it also has the trivial strong space. Thus, the objects of $\mathcal{C}$ are the same as the objects of $\mathcal{S}_t$ and hence $\mathcal{C}=\mathcal{S}_t$. Now, let $X$ be a non-locally-indiscrete space in $\mathcal{S}$ and for the sake of contradiction, assume $M_Y \neq Y$, for some $(Y, \to_Y)$ in $\mathcal{C}$. Set $y \in Y-M_Y$. As $\mathcal{S}$ is local and has a terminal object, the singleton space $\{*\}$ is a full object in $\mathcal{S}$. As $\mathcal{S}$ is closed under the embeddings, $\mathcal{S}$ has $Y-M_Y$ as an object and the embeddings $i: Y-M_Y \to Y$ and $k: \{*\} \to Y-M_Y$ as the morphisms, where $i$ is the inclusion and $k$ maps $*$ to $y$. Consider the map $f=ik!: X \to Y$, where $!: X \to \{*\}$ is the constant function. As $\{*\}$ is full, $!$ and hence $f$ lives in $\mathcal{S}$. By the geometricity of $\mathcal{C}$ over $\mathcal{S}$, there is an implication $\to_X$ over $X$ such that $f^{-1}(M_Y)=M_X$. As $f$ is a constant function mapping every element of $X$ to $y \notin M_Y$, we have $f^{-1}(M_Y)=\varnothing$. Hence, $M_X=\varnothing$. Therefore, by Remark \ref{TheRem}, $X$ is locally indiscrete which is a contradiction. Therefore, $M_Y=Y$. \\
For $(ii)$, it is easy to see that the four geometric categories over $\mathcal{S}$ are different.
Let $X$ be a non-indiscrete space in $\mathcal{S}$. As it is not indiscrete, there is a non-trivial open $\varnothing \subset M \subset X$. As $X$ is locally indiscrete, by Remark \ref{TheRem}, there is a WBI over $X$ with the core $M$. This WBI is clearly different from the Boolean and the trivial ones, as its core, i.e., $M$ is different from the cores of the other two that are $\varnothing$ and $X$. Hence, the fibers over $X$ in the four categories $\mathcal{S}_t$, $\mathcal{S}_b$, $\mathcal{S}_{bt}$ and $\mathcal{S}_a$ are different.
To prove that these four categories are the only geometric categories over $\mathcal{S}$, we will consider the following two cases. Either, $M_Y=\varnothing$ or $M_Y=Y$, for any $(Y, \to_Y)$ in $\mathcal{C}$ or there exists a strong space $(Y, \to_Y)$ in $\mathcal{C}$ such that $\varnothing \subset M_Y \subset Y$. In the first case, we show that $\mathcal{C}$ is uniquely determined by the fiber over $\{*\}$ and it is one of $\mathcal{S}_t$, $\mathcal{S}_b$ or $\mathcal{S}_{bt}$. In the second case, we show that $\mathcal{C}=\mathcal{S}_a$.
For the first case, as $\{*\}$ has only two subsets, there are only two possible implications over $\{*\}$. If both are in $\mathcal{C}$, then as the constant function $!: Y \to \{*\}$ lives in $\mathcal{S}$, by the fullness of $\{*\}$, the geometricity dictates the existence of two strong spaces $(Y, \to_Y)$ and $(Y, \to'_Y)$ in $\mathcal{C}$ such that $!^{-1}(\{*\})=M_Y$ and $!^{-1}(\varnothing)=M'_Y$. Therefore, $M_Y=Y$ and $M'_Y=\varnothing$. Hence, over $Y$ the implications with both cores appear. Hence, on objects, $\mathcal{C}$ and $\mathcal{S}_{bt}$ are similar which implies $\mathcal{C}=\mathcal{S}_{bt}$. For the other case, if the fiber over $\{*\}$ has just one implication, denote its core by $M_*$. Now, pick an arbitrary non-empty strong space $(Y, \to_Y)$ in $\mathcal{C}$ with the core $M_Y$. Note that it is either $\varnothing$ or $Y$, by the assumption. As $Y$ is non-empty, there is a map $i: \{*\} \to Y$. This is an embedding and hence it is in $\mathcal{S}$. Therefore, by geometricity, $i^{-1}(M_Y)=M_*$. Hence, if $M_*=\varnothing$, we have $M_Y=\varnothing$ and if $M_*=\{*\}$, we reach $M_Y=Y$. Therefore, by the same reasoning as before, either $\mathcal{C}=\mathcal{S}_b$ or $\mathcal{C}=\mathcal{S}_t$.\\
For the second case, assume the existence of a strong space $(Y, \to_Y)$ in $\mathcal{C}$ such that $\varnothing \subset M_Y \subset Y$. For any $Z$ in $\mathcal{S}$ and any open $N \subseteq Z$, define $\to_N$ as the WBI with the core $N$. As $Y$ is locally indiscrete, this is possible by Remark \ref{TheRem}. We show that $(Z, \to_N)$ is in $\mathcal{C}$ and as $N$ is arbitrary, we conclude $\mathcal{C}=\mathcal{S}_a$. Let $\{*, \dagger\}$ be the full discrete space with two elements in $\mathcal{S}$. Set $y \in M_Y$ and $z \in Y-M_Y$ and let $f: Z \to \{*, \dagger\}$ be the function mapping $N$ to $*$ and $Z-N$ to $\dagger$. As $N$ is open and $Z$ is locally indiscrete, $N$ is also clopen. Hence, $f$ is continuous. As $\mathcal{S}$ has all maps into $\{*, \dagger\}$, the map $f$ is in $\mathcal{S}$. Now, consider the map $i: \{*, \dagger\} \to Y$ mapping $*$ to $y$ and $\dagger$ to $z$. As $\{*, \dagger\}$ is discrete, the map $i$ is continuous. As $M_Y$ is open and $Y$ is locally indiscrete, $M_Y$ is clopen.
Hence, it is easy to see that $i$ is an embedding. Therefore, $i$ is in $\mathcal{S}$. Consider the map $g: Z \to Y$ defined by $g=if$ and notice that $g$ is also in $\mathcal{S}$. As $\mathcal{C}$ is geometric over $\mathcal{S}$, there must be a strong space $(Z, \to_Z)$ with the core $M_Z$ such that $g^{-1}(M_Y)=M_Z$. By construction, we have $g^{-1}(M_Y)=f^{-1}(i^{-1}(M_Y))=f^{-1}(\{*\})=N$. Therefore, $M_Z=N$. As any WBI is uniquely determined by its core, we have $\to_Z=\to_N$. Hence, $(Z, \to_N)$ is in $\mathcal{C}$.\\
For $(iii)$, note that the trivial and the Boolean implications are different over $\mathcal{O}(\{*\})$, as their cores, i.e., $\{*\}$ and $\{*\}^c=\varnothing$ are different. Hence, the three categories $\mathcal{S}_t$, $\mathcal{S}_b$ and $\mathcal{S}_{bt}$ are different, as their fibers over $\{*\}$ are different. To prove that these are the only geometric categories over $\mathcal{S}$, assume that $(X, \to_X)$ is an arbitrary object in $\mathcal{C}$. As $X$ is indiscrete and has only two opens, the core is either $\varnothing$ or $X$. The rest is similar to the first case in $(ii)$. 
\end{proof}

\begin{corollary}\label{MainCor}
Let $\mathcal{S}$ be a full subcategory of $\mathbf{Top}$ that is also local. Then: 
\begin{description}
\item[$(i)$]
If $\mathcal{S}$ has at least one non-locally-indiscrete space, then the only geometric category over $\mathcal{S}$ is $\mathcal{S}_t$.
\item[$(ii)$]
If $\mathcal{S}$ only consists of locally-indiscrete spaces and includes a non-indiscrete space, then the only geometric categories over $\mathcal{S}$ are the four distinct categories $\mathcal{S}_t$, $\mathcal{S}_b$, $\mathcal{S}_{bt}$ and $\mathcal{S}_a$. 
\item[$(iii)$]
If $\mathcal{S}$ only consists of indiscrete spaces, then the only geometric categories over $\mathcal{S}$ are the three distinct categories $\mathcal{S}_t$, $\mathcal{S}_b$, and $\mathcal{S}_{bt}$.
\end{description}
\end{corollary}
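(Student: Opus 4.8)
The plan is to reduce the corollary to Theorem \ref{MainCharacterization} by showing that fullness together with locality already forces the two extra hypotheses used there, namely the existence of a terminal object and, in the locally-indiscrete case, the presence of a full discrete two-point space.

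First I would check that $\mathcal{S}$ has a terminal object. Since $\mathcal{S}$ is local it contains a non-empty object, and, as observed right after the definition of local subcategories, it then contains a singleton space $\{*\}$. Because $\mathcal{S}$ is a full subcategory of $\mathbf{Top}$ and $\{*\}$ is terminal in $\mathbf{Top}$, the unique continuous map $Y \to \{*\}$ lies in $\mathcal{S}$ for every object $Y$; hence $\{*\}$ is full in $\mathcal{S}$, and by the remark preceding Theorem \ref{MainCharacterization} it is therefore a terminal object of $\mathcal{S}$. Consequently $\mathcal{S}$ satisfies the standing hypothesis of Theorem \ref{MainCharacterization}.

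Cases $(i)$ and $(iii)$ are now immediate: their hypotheses (``at least one non-locally-indiscrete space'' and ``only indiscrete spaces'', respectively) coincide with those of the corresponding items of Theorem \ref{MainCharacterization}, so there is nothing further to do. For case $(ii)$ the only missing ingredient is a full discrete space with two points. To build one, I would take a non-indiscrete space $X$ in $\mathcal{S}$. Since $X$ is locally indiscrete, every open subset of $X$ is clopen, so non-indiscreteness gives a clopen $U$ with $\varnothing \subsetneq U \subsetneq X$. Choosing $x \in U$ and $y \in X - U$, the subspace $\{x,y\}$ is discrete because $\{x\} = U \cap \{x,y\}$ and $\{y\} = (X-U) \cap \{x,y\}$ are both open in it. As $\mathcal{S}$ is closed under embeddings it contains $\{x,y\}$ together with its inclusion into $X$, and as $\mathcal{S}$ is full this two-point discrete space is full in $\mathcal{S}$. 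Theorem \ref{MainCharacterization}$(ii)$ then applies directly and yields the four distinct categories $\mathcal{S}_t$, $\mathcal{S}_b$, $\mathcal{S}_{bt}$, $\mathcal{S}_a$.

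I do not anticipate a real obstacle: the corollary is essentially a repackaging of Theorem \ref{MainCharacterization} under the cleaner hypothesis ``full and local''. The one step that deserves a moment of care is the construction of the full discrete two-point space in case $(ii)$, which genuinely uses that open and clopen coincide in a locally indiscrete space --- for a general non-indiscrete space (e.g.\ the Sierpi\'nski space) no discrete two-point subspace need exist, so fullness and locality alone would not suffice without the locally-indiscrete assumption.
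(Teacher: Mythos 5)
Your proposal is correct and follows essentially the same route as the paper: derive the terminal object from locality plus fullness of the singleton, apply Theorem \ref{MainCharacterization} directly for $(i)$ and $(iii)$, and for $(ii)$ extract a full discrete two-point space from a clopen proper subset of a non-indiscrete, locally indiscrete space. The only cosmetic difference is that you realize the two-point space as a subspace $\{x,y\}$ while the paper embeds an abstract $\{*,\dagger\}$; the argument is the same.
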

\begin{proof} 
As $\mathcal{S}$ is local, it has the singleton object and as $\mathcal{S}$ is a full subcategory, the singleton is a full object and hence it is the terminal object of $\mathcal{S}$. Having made that observation, for $(i)$ and $(iii)$, it is enough to use Theorem \ref{MainCharacterization}. For $(ii)$, let $Y$ be a non-indiscrete yet locally indiscrete space in $\mathcal{S}$. Hence, it has a non-trivial open $\varnothing \subset U \subset Y$ and as $Y$ is locally indiscrete, $U$ is clopen. Pick two elements $x \in U$ and $y \in Y-U$. The map $g: \{*, \dagger\} \to X$ mapping $*$ to $x$ and $\dagger$ to $y$ is continuous as $\{*, \dagger\}$ is discrete. It is an embedding, as $U$ is clopen. Hence, as $\mathcal{S}$ is local, it must have the discrete space $\{*, \dagger\}$. Finally, as $\mathcal{S}$ is a full subcategory, the space $\{*, \dagger\}$ is a full object. Now, use Theorem \ref{MainCharacterization} to complete the proof.
\end{proof}

\begin{corollary}
$\mathbf{Top}_t$ is the only geometric category over $\mathbf{Top}$.
\end{corollary}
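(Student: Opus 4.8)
The plan is to obtain this as an immediate instance of Corollary~\ref{MainCor}(i). First I would check that $\mathbf{Top}$, regarded as a subcategory of itself, meets the hypotheses of that corollary. It is trivially a full subcategory of $\mathbf{Top}$. It is also local: it certainly has a non-empty object, for instance the singleton space $\{*\}$, and it is closed under all embeddings simply because it is closed under all continuous maps whatsoever. (Equivalently, since $\mathbf{Top}$ is full and local, the singleton is a full object and hence a terminal object, so one could just as well invoke Theorem~\ref{MainCharacterization} directly.)

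Next I would exhibit a space in $\mathbf{Top}$ that is not locally indiscrete, so that case~(i) of the corollary, rather than (ii) or (iii), applies. The Sierpi\'nski space $S=\{0,1\}$ with opens $\varnothing,\{1\},S$ does the job: the only open set containing $0$ is $S$, which is not indiscrete, so $0$ has no indiscrete open neighbourhood; alternatively, in $\mathbb{R}$ with its usual topology a one-point set is closed but not open, which already contradicts local indiscreteness. With these hypotheses verified, Corollary~\ref{MainCor}(i) asserts that the only geometric category over $\mathbf{Top}$ is $\mathbf{Top}_t$, which is precisely the claim.

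There is no genuine obstacle at this final step: the only thing one has to supply is the witness non-locally-indiscrete space, and all the substantive work has already been carried out in Theorem~\ref{PreservationUnderEmbedding}, Theorem~\ref{MainCharacterization} and Corollary~\ref{MainCor}. If one wanted the argument to be self-contained one could instead unwind the proof of Theorem~\ref{MainCharacterization}(i) for $\mathcal{S}=\mathbf{Top}$: locality forces every strong space in a geometric $\mathcal{C}$ to be weakly Boolean with a core, and then pulling that core back along a constant map $X\to\{*\}\to Y$ hitting a point outside the core of some $(Y,\to_Y)$ would make $X$ locally indiscrete for \emph{every} $X$ in $\mathbf{Top}$, which fails for the Sierpi\'nski space; hence every core equals the whole space, i.e.\ $\mathcal{C}=\mathbf{Top}_t$. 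But quoting Corollary~\ref{MainCor}(i) is the cleanest route.
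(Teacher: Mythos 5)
Your proposal is correct and matches the paper's intended argument exactly: the corollary is stated as an immediate consequence of Corollary~\ref{MainCor}(i), and the only thing to supply is the verification that $\mathbf{Top}$ is full, local, and contains a non-locally-indiscrete space, which your Sierpi\'nski (or $\mathbb{R}$) witness handles correctly.
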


The following example shows that the existence of the terminal object in Theorem \ref{MainCharacterization} is crucial, as without the condition, it is possible to have infinitely many geometric categories over some local subcategories of $\mathbf{Top}$.

\begin{theorem}
Let $\mathcal{S}$ be a category of Hausdorff spaces with injective continuous maps such that the size of the objects is not bounded by any finite number. Then, there are infinitely many geometric categories over $\mathcal{S}$.
\end{theorem}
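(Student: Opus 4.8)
The plan is to produce, for every natural number $n\ge 0$, a geometric category $\mathcal{C}_n$ over $\mathcal{S}$, and then to show that the $\mathcal{C}_n$ are pairwise distinct. Fix $n$ and let $\mathcal{C}_n$ be the category whose objects are the strong spaces $(X,\to)$ with $X$ an object of $\mathcal{S}$ and $\to$ a WBI over $\mathcal{O}(X)$ whose core $M=\neg X$ has the form $M=X-N$ for some subset $N\subseteq X$ with $|N|\le n$, and whose morphisms are the strong space maps that $U$ sends into $\mathcal{S}$ (exactly as for $\mathcal{S}_a$ in Example \ref{ExamplesOfGeometricCats}). Since the objects of $\mathcal{S}$ are Hausdorff, any such finite $N$ is closed and discrete, so by Example \ref{ExamOfHaus} the operation $U\to V=U^c\cup V\cup(X-N)$ really is a WBI, and by Corollary \ref{TheCor} its core is exactly $X-N$; taking $N=\varnothing$ gives the trivial implication, so every object of $\mathcal{S}$ lies in the image of $U\colon\mathcal{C}_n\to\mathbf{Top}$. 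As composites of strong maps are strong and composites of maps of $\mathcal{S}$ stay in $\mathcal{S}$, $\mathcal{C}_n$ is a genuine subcategory of strong spaces and $U$ lands in $\mathcal{S}$; this is condition $(i)$ of geometricity.

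For condition $(ii)$, take an object $(Y,\to_Y)$ of $\mathcal{C}_n$ with core $M_Y=Y-N_Y$, $|N_Y|\le n$, and any map $f\colon X\to Y$ of $\mathcal{S}$. Set $N_X=f^{-1}(N_Y)$; since $f$ is injective it restricts to an injection $N_X\hookrightarrow N_Y$, so $|N_X|\le|N_Y|\le n$, and since $X$ is Hausdorff, Example \ref{ExamOfHaus} again yields a WBI $\to_X$ over $\mathcal{O}(X)$ whose core is $X-N_X=f^{-1}(M_Y)$. By Remark \ref{TheRem} a continuous map between two WBS's is a strong map exactly when the inverse image carries the core to the core, so $f\colon(X,\to_X)\to(Y,\to_Y)$ is a strong map lying over $\mathcal{S}$, that is, the required lift in $\mathcal{C}_n$. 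Hence each $\mathcal{C}_n$ is geometric over $\mathcal{S}$. This is precisely where the injectivity of the maps of $\mathcal{S}$ is needed: it is what prevents the size of the complement of the core from growing under pullback, so that a uniform finite bound on it survives.

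It remains to separate the $\mathcal{C}_n$, and here the unboundedness of the sizes of the objects of $\mathcal{S}$ enters. Given $n$, pick an object $X$ of $\mathcal{S}$ with $|X|\ge n+1$ and a subset $N\subseteq X$ with $|N|=n+1$; the WBI with core $X-N$ is then an object of $\mathcal{C}_{n+1}$. By Corollary \ref{TheCor} the core of a WBI over $\mathcal{O}(X)$ is uniquely determined, and $X-N=X-N'$ forces $N=N'$, so this object cannot belong to $\mathcal{C}_n$, all of whose objects over $X$ have cores whose complement has size $\le n$. Since moreover $\mathcal{C}_0\subseteq\mathcal{C}_1\subseteq\cdots$, the inclusions $\mathcal{C}_n\subsetneq\mathcal{C}_{n+1}$ are strict and the $\mathcal{C}_n$ are pairwise distinct, yielding infinitely many geometric categories over $\mathcal{S}$. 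I do not expect any serious obstacle: the only points needing care are that "the core is cofinite with complement of size $\le n$" is genuinely stable under pullback along injective continuous maps (handled by $N_X=f^{-1}(N_Y)$ together with injectivity and Hausdorffness) and that distinct values of $n$ really give distinct categories (handled by the uniqueness of the core from Corollary \ref{TheCor} and the unboundedness hypothesis).
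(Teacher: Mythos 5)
Your proposal is correct and follows essentially the same route as the paper: the same categories $\mathcal{C}_n$ of WBI's with cofinite cores of complement size at most $n$, the same use of Example \ref{ExamOfHaus} and Remark \ref{TheRem} for well-definedness and lifting, the same appeal to injectivity to control the pullback of the core's complement, and the same separation argument via an object whose core has complement of size exactly $n+1$.
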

\begin{proof}
For any natural number $n \geq 0$, define the category $\mathcal{C}_n$ as follows. For objects, consider all strong spaces $(X, \to_M)$, where $X$ is in $\mathcal{S}$, $M \subseteq X$ is a subset such that $X-M$ has at most $n$ elements and $\to_M$ is the WBI with the core $M$. Note that as $X-M$ is finite and $X$ is Hausdorff, $X-M$ is closed and discrete.  Therefore, the implication is well-defined by Example \ref{ExamOfHaus}. For the morphisms, consider all maps $f: (X, \to_M) \to (Y, \to_N)$, where $f: X \to Y$ is in $\mathcal{S}$ and $f^{-1}(N)=M$. By Remark \ref{TheRem}, the maps are strong space maps and hence well defined. We prove that $\mathcal{C}_n$ is geometric over $\mathcal{S}$. It is clear that $U$ maps $\mathcal{C}_n$ into $\mathcal{S}$. The fibers are non-empty as the trivial space $(X, \to_t)$ is in $\mathcal{C}_n$. For geometricity, for any $(Y, \to_N)$ and any map $f: X \to Y$ in $\mathcal{S}$, as $Y-N$ has at most $n$ elements and $f$ is injective, the set $f^{-1}(Y-N)=X-f^{-1}(N)$ has at most $n$ elements, as well. Hence, $f$ lifts to the strong space map $f:(X, \to_{f^{-1}(N)}) \to (Y, \to_N)$. Finally, it is clear that $\mathcal{C}_n$ is a subcategory of $\mathcal{C}_{n+1}$. We show that this inclusion is proper. To prove it, note that the size of the objects in $\mathcal{S}$ is not bounded by a finite number. Hence, for any $n$, there exists a space $X$ with at least $n+1$ elements. Choose $M$ such that $X-M$ has exactly $n+1$ elements. Then, $(X, \to_M)$ appears in $\mathcal{C}_{n+1}$ but not in $\mathcal{C}_n$. 
\end{proof}

\section{A Kripke-style Representation Theorem}\label{SecRepresentation}
In the previous sections, we have introduced three families of implications and their corresponding geometric characterizations. These implications are also interesting in their own right for their algebraic and logical aspects. Algebraically, they are different internalizations of the order of the base lattice, while logically, as observed in Remark \ref{LogicalJustification}, they are reflecting different versions of the right implication rule. Among the three, the closed and the weakly Boolean implications are rather too restricted in their form and hence too close to the well-understood Boolean implication. However, the open implications may come in many different forms, appearing in many different logical disciplines. For instance, the open meet-internalizing and join-internalizing implications are well-studied in provability logics \cite{visser1981propositional} and philosophical discussions around the impredicativity of the intuitionistic implication \cite{Ru}. Inspired by these aspects, in this section, we intend to provide a representation theorem for all implications, in general, and the open implications, in particular.

To provide this representation theorem, we first need to use a Yoneda-type technique augmented by an ideal completion to embed any strong algebra in a strong algebra over a locale, where the implication becomes a part of an adjunction-like situation. The machinery is explained in detail in \cite{akbar2021implication}. However, for the sake of completeness and to address the new case of the open implications, we will repeat the main construction here and refer the reader to \cite{akbar2021implication}, for a more detailed explanation.

\begin{lemma}\label{RepresentationLemma}
For any strong algebra $(\mathcal{A}, \to_{\mathcal{A}})$, there exist a locale $\mathcal{H}$, an implication $\to_{\mathcal{H}}$ over $\mathcal{H}$, an embedding $i : (\mathcal{A}, \to_{\mathcal{A}}) \to (\mathcal{H}, \to_{\mathcal{H}})$, a join preserving operator $\nabla: \mathcal{H} \to \mathcal{H}$ and an order-preserving map $F: \mathcal{H} \to \mathcal{H}$ such that $\nabla x \wedge F(y) \leq F(z)$ iff $x \leq y \to_{\mathcal{H}} z$, for any $x, y, z \in \mathcal{H}$. If $\to_\mathcal{A}$ is open, then $\to_\mathcal{H}$ can be chosen as open.
\end{lemma}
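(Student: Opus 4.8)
The plan is to reduce the statement to a clean ``free lunch'': on \emph{any} locale $\mathcal{H}$, any operator $\nabla : \mathcal{H} \to \mathcal{H}$ preserving all joins together with any order-preserving $F : \mathcal{H} \to \mathcal{H}$ automatically produces an implication by setting $y \to_{\mathcal{H}} z := \bigvee \{ x \in \mathcal{H} \mid \nabla x \wedge F(y) \leq F(z) \}$. First I would verify the advertised adjunction-like property: the direction ``$x \le y\to_{\mathcal{H}} z$ $\Rightarrow$ $\nabla x\wedge F(y)\le F(z)$'' requires the ``counit'' $\nabla(y \to_{\mathcal{H}} z) \wedge F(y) \leq F(z)$, which holds because $\nabla$ preserves the join defining $y \to_{\mathcal{H}} z$ and the map $x \mapsto x \wedge F(y)$ preserves arbitrary joins in the locale $\mathcal{H}$; the converse direction is immediate. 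Granting this, internal reflexivity is trivial ($\nabla 1 \wedge F(y) \leq F(y)$), internal transitivity follows from monotonicity of $\nabla$ and frame distributivity (chain $\nabla(p\wedge q)\wedge F(x)\le \nabla q\wedge(\nabla p\wedge F(x))\le \nabla q\wedge F(y)\le F(z)$ for $p=x\to_\mathcal{H} y$, $q=y\to_\mathcal{H} z$), and monotonicity of $\to_{\mathcal{H}}$ in each argument is clear. So the whole problem collapses to producing $\mathcal{H}$, maps $\nabla$ and $F$ as above, and a bounded-lattice embedding $i : \mathcal{A} \to \mathcal{H}$ with $i(a \to_{\mathcal{A}} b) = i(a) \to_{\mathcal{H}} i(b)$; this is precisely the slogan of Example \ref{ExampleOfImplications} in the form $a \to_{\mathcal{A}} b = g\big(f(a) \Rightarrow_{\mathcal{H}} f(b)\big)$ with $g=\nabla_{*}$ a \emph{right adjoint}, $F=f$, so that $(y,z)\mapsto \nabla_{*}\big(F(y) \Rightarrow_{\mathcal{H}} F(z)\big)$.

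For the construction of $\mathcal{H}, i, \nabla, F$ I would follow the Yoneda-plus-ideal-completion recipe of \cite{akbar2021implication}: embed $\mathcal{A}$ by a Yoneda-type map into a lattice of down-/up-sets built from $\mathcal{A}$ (so finite meets are preserved on the nose), repair joins by passing to an ideal completion to land in a genuine frame $\mathcal{H}$ that is join-generated by the image $i(\mathcal{A})$, and then read $\nabla$ and $F$ off $\to_{\mathcal{A}}$ itself, so that $\to_{\mathcal{H}}$ takes exactly the KN-frame form of Example \ref{ExamOfFrame}, with $\nabla$ playing the role of the $\Diamond_{R}$-operator and $F$ that of the neighbourhood operator $j$. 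That $i$ is a bounded-lattice embedding is the standard Stone/Priestley fact (ultimately Theorem \ref{PrimeFilterTheorem}). The identity $i(a \to_{\mathcal{A}} b) = i(a) \to_{\mathcal{H}} i(b)$ splits into a soundness inclusion, a direct unwinding of the definitions of $\nabla$ and $F$ using internal transitivity of $\to_{\mathcal{A}}$, and a ``truth lemma'' inclusion, where the Prime Filter Theorem supplies, whenever $a \to_{\mathcal{A}} b \notin P$, a witnessing point $Q$ at which $a$ holds neighbourhood-wise but $b$ does not.

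Finally, for the open refinement: by Lemma \ref{PropertiesOfOpenAndClosed}$(i)$, openness of $\to_{\mathcal{H}}$ amounts to the single inequality $\nabla y \wedge F(x) \leq F(x \wedge y)$ for all $x, y \in \mathcal{H}$. When $\to_{\mathcal{A}}$ is open, the same lemma gives $a \leq b \to_{\mathcal{A}} (a \wedge b)$, which under $i$ becomes $\nabla(i(a)) \wedge F(i(b)) \leq F\big(i(a) \wedge i(b)\big)$, i.e.\ the required inequality on the generating set $i(\mathcal{A})$. The plan is then to propagate it to all of $\mathcal{H}$: writing arbitrary $x,y$ as joins of elements of $i(\mathcal{A})$ and using that $\nabla$ preserves joins and that $F$ can be taken join-preserving in this construction (equivalently, that the underlying KN-frame can be arranged to be open, cf.\ the open case of Examples \ref{ExamOfFrame} and \ref{ExamOfSatndardFrame}), the inequality extends by distributing joins through $\wedge$, $\nabla$ and $F$. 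The main obstacle is exactly here: one must carry out the canonical construction so that the extra formal points and neighbourhoods introduced to witness the failures of ``$a \to_{\mathcal{A}} b = 1$'' do not destroy this openness condition; this compatibility with openness is the point that goes beyond the construction recorded in \cite{akbar2021implication} and is the reason for repeating the argument here.
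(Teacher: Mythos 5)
Your opening reduction is sound and matches the paper: with $\nabla$ join-preserving, $y \to_{\mathcal{H}} z := \bigvee\{x \mid \nabla x \wedge F(y) \leq F(z)\}$ is exactly $\Delta(F(y) \Rightarrow F(z))$ for $\Delta$ the right adjoint of $\nabla$, and the verification that this is an implication is Example \ref{ExampleOfImplications}. But the actual content of the lemma --- producing $\mathcal{H}$, $i$, $\nabla$, $F$ with $i(a \to_{\mathcal{A}} b) = i(a) \to_{\mathcal{H}} i(b)$ --- is left to a recollection of the cited construction, and the details you do supply point at the wrong machinery. The paper takes $\mathcal{B}$ to be the lattice of functions $\prod_{n=0}^{\infty}\mathcal{A} \to \mathcal{A}$ with pointwise operations, $s$ the input shift, $\mathcal{H}$ the ideal lattice of $\mathcal{B}$, $i(a) = {\downarrow}c_a$, $\nabla I = {\downarrow}s[I]$ and $F(I) = {\downarrow}\{x_0 \to_{\mathcal{B}} s(g) \mid g \in I\}$; the identity $i(a \to_{\mathcal{A}} b) = i(a) \to_{\mathcal{H}} i(b)$ is then proved by a substitution trick (instantiate the free coordinate $x_0$ at $a$) together with internal transitivity. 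No Priestley duality, no prime filter theorem, and no ``truth lemma'' with witnessing points $Q$ occur here --- those belong to the proof of Theorem \ref{RepTheorem}, which you appear to be conflating with this lemma. As written, your proposal does not construct anything.

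The openness refinement has a concrete gap beyond incompleteness. Your propagation step assumes that $\mathcal{H}$ is join-generated by $i(\mathcal{A})$ and that $F$ ``can be taken join-preserving.'' Neither holds in the construction that makes the rest of the lemma work: in the ideal lattice of $\mathcal{B}$, the principal ideal of a non-constant function is not a join of ideals $i(a)$, and $F$ fails to preserve joins because $x_0 \to_{\mathcal{B}} s(g_1 \vee g_2)$ need not lie below $(x_0 \to_{\mathcal{B}} s(g_1)) \vee (x_0 \to_{\mathcal{B}} s(g_2))$ --- implications are not join-internalizing in the second argument. You correctly identify this compatibility as ``the main obstacle,'' but the mechanism you propose to overcome it is unavailable. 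The paper instead proves $\nabla I \cap F(J) \subseteq F(I \cap J)$ directly for arbitrary ideals $I, J$: given $f \leq s(g)$ and $f \leq x_0 \to_{\mathcal{B}} s(h)$ with $g \in I$, $h \in J$, pointwise openness gives $g \leq h \to_{\mathcal{A}} (g \wedge h)$, internal transitivity then yields $f \leq x_0 \to_{\mathcal{B}} s(g \wedge h)$, and $k = g \wedge h \in I \cap J$ witnesses $f \in F(I \cap J)$. This element-wise argument is what needs to replace your join-propagation step.
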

\begin{proof}
Let $\mathcal{B}$ be the set of all functions $f: \prod_{n=0}^{\infty} \mathcal{A} \to \mathcal{A}$ with the pointwise ordering $\leq_{\mathcal{B}}$. It is clear that $(\mathcal{B}, \leq_{\mathcal{B}})$ is a bounded distributive lattice. It is also possible to define an implication $\to_{\mathcal{B}}$ over $\mathcal{B}$ in a pointwise fashion. Define $s: \mathcal{B} \to \mathcal{B}$ as a shift function on the inputs, i.e., $[s(f)](\{x_i\}_{i=0}^{\infty})=f(\{x_i\}_{i=1}^{\infty})$. Notice that $s$ respects the order $\leq_{\mathcal{B}}$, the meets and the joins as they are defined pointwise. Set $\mathcal{H}$ as the locale of the ideals of $
\mathcal{B}$ and define $i: \mathcal{A} \to \mathcal{H}$ as $i(a)=\{f \in \mathcal{B} \mid f \leq c_a\}$, where $c_a$ is the constant function mapping every input into $a$. It is clear that $i$ is an embedding respecting all finite joins and meets. Define $\nabla I=\{f \in \mathcal{B} \mid \exists g \in I \; [f \leq s(g)]\}$ and $F(I)=\{f \in \mathcal{B} \mid \exists g \in I \, [f \leq (x_0 \to_{\mathcal{B}} s(g))]\}$. Note that both $\nabla$ and $F$ are order-preserving. It is easy to see that $\nabla$ is also join preserving. As $\mathcal{H}$ is complete, $\nabla$ has a right adjoint $\Delta$. Define $I \to_{\mathcal{H}} J=\Delta (F(I) \Rightarrow F(J))$, where $\Rightarrow$ is the Heyting implication of $\mathcal{H}$. The operation $\to_{\mathcal{H}}$ is an implication, by Example \ref{ExampleOfImplications}. By the adjunction $\nabla \dashv \Delta$, it is clear that $\nabla K \cap F(I) \subseteq F(J)$ iff $K \subseteq I \to_{\mathcal{H}} J$, for any ideal $I, J, K$ of $\mathcal{B}$. The only remaining thing to prove is the identity $i(a \to_{\mathcal{A}} b)=i(a) \to_{\mathcal{H}} i(b)$, for any $a, b \in \mathcal{A}$.
To prove, it is enough to show that $\nabla I \cap F(i(a)) \subseteq F(i(b))$ iff $I \subseteq i(a \to_{\mathcal{A}} b)$, for any ideal $I$ of $\mathcal{B}$. For the forward direction, if $f \in I$, then $s(f) \in \nabla I$ and as $c_a \in i(a)$, we have $s(f) \wedge (x_0 \to_{\mathcal{B}} s(c_a)) \in \nabla I \cap F(i(a))$. Therefore, $s(f) \wedge (x_0 \to_{\mathcal{B}} s(c_a)) \in F(i(b))$ which implies $s(f) \wedge (x_0 \to_{\mathcal{B}} s(c_a)) \leq (x_0 \to_{\mathcal{B}} s(c_b))$. This means, $f(\{x_i\}_{i=1}^{\infty}) \wedge (x_0 \to_{\mathcal{A}} a) \leq x_0 \to_{\mathcal{A}} b$, for any $\{x_i\}_{i=0}^{\infty}$.
Putting $x_0=a$, we reach $f(\{x_i\}_{i=1}^{\infty}) \leq a \to_{\mathcal{A}} b$, for any $\{x_i\}_{i=0}^{\infty}$. Thus, as $\{x_i\}_{i=0}^{\infty}$ is arbitrary, we have $f(\{x_i\}_{i=0}^{\infty}) \leq a \to_{\mathcal{A}} b$, for any $\{x_i\}_{i=0}^{\infty}$. Therefore, $f \leq c_{a \to_{\mathcal{A}} b}$ which implies $f \in i(a \to_{\mathcal{A}} b)$. For the backward direction, if $f \in \nabla I \cap F(i(a))$, then there exist $g \in I$ and $h \in i(a)$ such that $f \leq s(g) \wedge (x_0 \to_{\mathcal{B}} s(h))$. As $g \in I$, we have $g \leq c_{a \to_{\mathcal{A}} b}$. Hence, $f \leq c_{a \to_{\mathcal{A}} b} \wedge (x_0 \to_{\mathcal{B}} c_a)$ which means $f(\{x_i\}_{i=0}^{\infty}) \leq (a \to_{\mathcal{A}} b) \wedge (x_0 \to_{\mathcal{A}} a)$, for any $\{x_i\}_{i=0}^{\infty}$. Therefore, $f(\{x_i\}_{i=0}^{\infty}) \leq x_0 \to_{\mathcal{A}} b$ which implies $f \leq x_0 \to_{\mathcal{B}} s(c_b)$. Hence, $f \in F(i(b))$.\\
Finally, for the preservation of the openness, if $\to_{\mathcal{A}}$ is open, using Lemma \ref{PropertiesOfOpenAndClosed}, it is enough to prove $I \subseteq J \to_{\mathcal{H}} I \cap J$, for any $I, J \in \mathcal{H}$. Using the adjunction-style condition for $\to_{\mathcal{H}}$, we must show that
$\nabla I \cap F(J) \subseteq F(I \cap J)$, for any $I, J \in \mathcal{H}$. To prove it, let $f \in \nabla I \cap F(J)$. Hence, there are $g \in I$ and $h \in J$ such that $f \leq s(g)$ and $f \leq x_0 \to_{\mathcal{B}} s(h)$, i.e., $f(\{x_i\}_{i=0}^{\infty}) \leq g(\{x_i\}_{i=1}^{\infty})$ and $f(\{x_i\}_{i=0}^{\infty}) \leq x_0 \to_{\mathcal{A}} h(\{x_i\}_{i=1}^{\infty})$, for any $\{x_i\}_{i=0}^{\infty}$. We want to show that 
\[
f(\{x_i\}_{i=0}^{\infty}) \leq x_0 \to_{\mathcal{A}} [g(\{x_i\}_{i=1}^{\infty}) \wedge h(\{x_i\}_{i=1}^{\infty})].
\]
As $\to_{\mathcal{A}}$ is open, $g(\{x_i\}_{i=1}^{\infty}) \leq h(\{x_i\}_{i=1}^{\infty}) \to_{\mathcal{A}} g(\{x_i\}_{i=1}^{\infty}) \wedge h(\{x_i\}_{i=1}^{\infty})$. Therefore, $f(\{x_i\}_{i=0}^{\infty})$ is less than or equal to
\[
(x_0 \to_{\mathcal{A}} h(\{x_i\}_{i=1}^{\infty})) \wedge (h(\{x_i\}_{i=1}^{\infty}) \to_{\mathcal{A}} [g(\{x_i\}_{i=1}^{\infty}) \wedge h(\{x_i\}_{i=1}^{\infty})]).
\]
Hence, $f(\{x_i\}_{i=0}^{\infty}) \leq x_0 \to_{\mathcal{A}} [g(\{x_i\}_{i=1}^{\infty}) \wedge h(\{x_i\}_{i=1}^{\infty})]$. Now, set $k=g \wedge h$. Therefore, $k \in I \cap J$ and $f \leq x_0 \to_{\mathcal{B}} s(k)$. Hence, $f \in F(I \cap J)$.
\end{proof}
In the following, we define a topological version of the KN-frames introduced before to represent all (open) strong algebras. Apart from providing a more specific representation, the topology helps to represent the open strong algebras by the open KN-frames. Later, we will see that to represent an arbitrary implication (without respecting the openness condition), even the full KN-frames (without using any topology) are sufficient.  
\begin{definition}
A \emph{KN-space} is a KN-frame $\mathcal{X}=(X, \leq, R, C(X), N)$, where $(X, \leq)$ is a Priestley space and $C(X)$ is the set of all clopens of $X$. Spelling out, it means that:
\begin{itemize}
\item[$\bullet$]
$R$ is compatible with the order, i.e., if $x \leq y$ and $(y, z) \in R$, then $(x, z) \in R$,
\item[$\bullet$]
for any $x \in X$ and any clopen upsets $U$ and $V$, if $U \subseteq V$ and $U \in N(x)$ then $V \in N(x)$,
\item[$\bullet$]
$\lozenge_R(U)=\{x \in X \mid \exists y \in U, (x, y) \in R\}$ is clopen, for any clopen $U$,
\item[$\bullet$]
$j(U)=\{x \in X \mid U \in N(x)\}$ is clopen, for any clopen upset $U$.
\end{itemize}
\end{definition}

\begin{theorem} \label{RepTheorem} (KN-space Representation)
For any strong algebra $(\mathcal{A}, \to_{\mathcal{A}})$, there exist a KN-space $\mathcal{X}$ and an embedding $i : (\mathcal{A}, \to_{\mathcal{A}}) \to \mathfrak{A}(\mathcal{X})$. Moreover, if $\mathcal{A}$ is open, then so is $\mathcal{X}$. 
\end{theorem}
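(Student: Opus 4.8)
The plan is to bootstrap from Lemma~\ref{RepresentationLemma} and then dualize the localic picture it provides. First apply Lemma~\ref{RepresentationLemma} to $(\mathcal{A},\to_{\mathcal{A}})$ to get a locale $\mathcal{H}$, an implication $\to_{\mathcal{H}}$, an embedding $i\colon(\mathcal{A},\to_{\mathcal{A}})\to(\mathcal{H},\to_{\mathcal{H}})$, a join-preserving $\nabla\colon\mathcal{H}\to\mathcal{H}$ with right adjoint $\Delta$ (which exists as $\mathcal{H}$ is complete), and an order-preserving $F\colon\mathcal{H}\to\mathcal{H}$ with $\nabla x\wedge F(y)\le F(z)$ iff $x\le y\to_{\mathcal{H}}z$; equivalently $y\to_{\mathcal{H}}z=\Delta\big(F(y)\Rightarrow F(z)\big)$, where $\Rightarrow$ is the Heyting implication of the locale $\mathcal{H}$. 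For later use, recall that the proof of Lemma~\ref{RepresentationLemma} in fact yields $\nabla I\wedge F(J)\le F(I\wedge J)$ for all $I,J\in\mathcal{H}$ when $\to_{\mathcal{A}}$ is open, and that there $\nabla$ is induced by the (injective, finite meet- and join-preserving) shift, so $\nabla$ is an injective frame homomorphism.

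Next let $(Y,\le)$ be the Priestley dual of the bounded distributive lattice underlying $\mathcal{H}$: $Y=\mathcal{F}_p(\mathcal{H})$, ordered by $\subseteq$, with the Priestley topology; write $\hat h=\{P\in Y\mid h\in P\}$, so $h\mapsto\hat h$ is a lattice isomorphism onto the clopen upsets of $Y$ and, since $\mathcal{H}$ is a Heyting algebra, an isomorphism of Heyting algebras, whence $\widehat{a\Rightarrow b}=\{P\mid\forall Q\supseteq P\,(a\in Q\Rightarrow b\in Q)\}$. I would then read off the KN-data. The frame homomorphism $\nabla$ dualizes to a continuous, order-preserving, surjective $\sigma\colon Y\to Y$ with $\widehat{\nabla h}=\sigma^{-1}(\hat h)$, and $\Delta$ is captured by $\widehat{\Delta h}=\{P\mid\forall Q\,(\sigma(Q)=P\Rightarrow h\in Q)\}$. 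Define $R\subseteq Y^{2}$ by $(P,Q')\in R\iff\exists Q\,(\sigma(Q)=P\text{ and }Q\subseteq Q')$ — this composes the ``$\Delta$-relation'' with $\le$, so a single $R$-quantifier absorbs both the modal quantifier of $\Delta$ and the intuitionistic quantifier of $\Rightarrow$ — and define $N(P):=\{\hat h\mid P\in\widehat{F(h)}\}$ (well defined since $h\mapsto\hat h$ is injective, and monotone on clopen upsets as $F$ is order-preserving), so $j(\hat h)=\{P\mid\hat h\in N(P)\}=\widehat{F(h)}$ is clopen. Set $\mathcal{X}:=(Y,\le,R,C(Y),N)$.

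The verification splits into three parts. (i) $\mathcal{X}$ is a KN-space: $R$ is compatible with $\le$ (immediate from its definition together with monotonicity of $\sigma$); $j$ sends clopen upsets to clopens; and the clopenness conditions for $\lozenge_R$ reduce, via the prime filter theorem and the adjunction $\nabla\dashv\Delta$, to the computation $\lozenge_R\big(j(\hat h)\cap j(\hat{h'})^{c}\big)=(\widehat{h\to_{\mathcal{H}}h'})^{c}$, which also shows $\to_{\mathcal{X}}$ is well defined. (ii) $h\mapsto\hat h$ is a strong algebra map: expanding the defining clause of $\to_{\mathcal{X}}$, replacing ``$\hat h\in N(Q')$'' by ``$F(h)\in Q'$'' and pulling the $\exists Q\subseteq Q'$ out of $R$, the clause collapses to $\{P\mid\forall Q\,(\sigma(Q)=P\Rightarrow Q\in\widehat{F(h)\Rightarrow F(h')})\}=\widehat{\Delta(F(h)\Rightarrow F(h'))}=\widehat{h\to_{\mathcal{H}}h'}$, so $h\mapsto\hat h$ is a strong algebra isomorphism $(\mathcal{H},\to_{\mathcal{H}})\to\mathfrak{A}(\mathcal{X})$; composing with $i$ gives the desired embedding $(\mathcal{A},\to_{\mathcal{A}})\hookrightarrow\mathfrak{A}(\mathcal{X})$. (iii) If $\to_{\mathcal{A}}$ is open, the inequality $\nabla I\wedge F(J)\le F(I\wedge J)$ transports to $\sigma^{-1}(U)\cap j(V)\subseteq j(U\cap V)$ for clopen upsets $U,V$; given $x\in U$, $(x,y)\in R$ (so $\sigma(z)=x$, $z\le y$ for some $z$) and $V\in N(y)$, monotonicity of $\sigma$ with $x\in U$ forces $\sigma(y)\in U$, i.e. $y\in\sigma^{-1}(U)$, and the inequality then gives $U\cap V\in N(y)$, so $\mathcal{X}$ is open.

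The main obstacle I expect is part (i): since $\nabla$ is merely an operator rather than the inverse image of an actual map of spaces, its dual relation and the composite $R$ have to be set up and normalized carefully so that \emph{all} the Priestley--frame conditions of a KN-space (compatibility with $\le$, and clopenness of $\lozenge_R$ on \emph{every} clopen and of $j$ on clopen upsets) hold simultaneously; this is exactly where one must use the explicit description of $\nabla$, $F$ and the shift from the proof of Lemma~\ref{RepresentationLemma}, not just their abstract adjunction properties. Matching $\to_{\mathcal{X}}$ with $\to_{\mathcal{H}}$ in (ii) and the openness transfer in (iii) are then essentially bookkeeping with the duality dictionary and with the inequality already isolated in Lemma~\ref{RepresentationLemma}.
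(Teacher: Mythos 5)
Your skeleton matches the paper's: reduce to Lemma \ref{RepresentationLemma}, pass to the Priestley dual of the resulting locale, extract $R$ from $\nabla$ and $N$ from $F$, and verify the KN-axioms with the prime filter theorem. But the execution diverges at the crucial point, and the divergence is a genuine gap. You dualize $\nabla$ to a \emph{function} $\sigma(Q)=\nabla^{-1}[Q]$ and then encode both $\Delta$ and $R$ through the \emph{equality} $\sigma(Q)=P$. The claimed identity $\widehat{\Delta h}=\{P\mid\forall Q\,(\sigma(Q)=P\Rightarrow h\in Q)\}$ is not a consequence of Priestley duality or of the prime filter theorem: the inclusion $\subseteq$ is fine, but for $\supseteq$ you must, given $\Delta h\notin P$, produce a prime $Q$ with $\nabla^{-1}[Q]$ \emph{exactly} equal to $P$ and $h\notin Q$. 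The prime filter theorem only separates the filter generated by $\nabla[P]$ from the ideal generated by $h$, yielding $Q$ with $P\subseteq\sigma(Q)$, possibly strictly. The same defect infects your $R$: compatibility of $R$ with $\leq$ (you call it ``immediate from monotonicity of $\sigma$'', but $x\subseteq\sigma(Q)$ does not yield a $Q'\subseteq Q$ with $\sigma(Q')=x$) and the collapse of the $\to_{\mathcal{X}}$-clause to $\widehat{\Delta(F(h)\Rightarrow F(h'))}$ both require lifting $P\subseteq\sigma(Q)$ to an equality — an open-map/p-morphism property of $\sigma$ that you neither state nor prove and that does not hold for general injective lattice endomorphisms. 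The paper sidesteps all of this by taking $R=\{(P,Q)\mid\nabla[P]\subseteq Q\}$, i.e.\ the relation $P\subseteq\sigma(Q)$ itself, for which order-compatibility is trivial and both directions of every needed equivalence follow from the prime filter theorem exactly as in standard modal duality.

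Two secondary points. First, your whole function-based setup leans on $\nabla$ being an injective, finite-meet-preserving map; this is a property of the particular shift construction inside the proof of Lemma \ref{RepresentationLemma}, not of its statement, so it would have to be verified explicitly (and even granting it, the main gap above remains). Second, the KN-space definition demands that $\lozenge_R(U)$ be clopen for \emph{every} clopen $U$, i.e.\ for every finite union of sets $i(a)\cap i(b)^c$; computing $\lozenge_R$ only on sets of the form $j(\hat h)\cap j(\hat{h'})^{c}$ does not cover this. The paper handles it with the auxiliary implication $a\to_{\nabla}b=\Delta(a\Rightarrow b)$ and the identity $\Diamond_R(i(a)\cap i(b)^c)=i(a\to_{\nabla}b)^c$, which you would need in some form as well.
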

\begin{proof}
By Lemma \ref{RepresentationLemma}, w.l.o.g., we assume that $\mathcal{A}$ is a locale and there are join preserving map $\nabla: \mathcal{A} \to \mathcal{A}$ and order-preserving map $F: \mathcal{A} \to \mathcal{A}$ such that $\nabla c \wedge F(a) \leq F(b)$ iff $c \leq a \to_{\mathcal{A}} b$, for any $a, b, c \in \mathcal{A}$. Set $(X, \leq)$ as the Priestley space $(\mathcal{F}_p(\mathcal{A}), \subseteq)$ and define $R$ as $\{(P, Q) \in X^2 \mid \nabla[P] \subseteq Q\}$. Set $i(a)=\{P \in X \mid a \in P\}$ and define $N(P)=\{U \in CU(X, \leq) \mid \exists a \in \mathcal{A} \; [i(a) \subseteq U \; \text{and} \; F(a) \in P]\}$. First, it is clear that $R$ is compatible with the inclusion, $N(P)$ is upward closed on the clopen upsets of $(X, \leq)$, for any $P \in X$ and $i$ is a bounded lattice embedding. Secondly, observe that $i(a) \in N(P)$ iff $F(a) \in P$, for any $a \in \mathcal{A}$ and $P \in \mathcal{F}_p(\mathcal{A})$. One direction is obvious from the definition of $N$. For the other, if $i(a) \in N(P)$, then there exists $b \in \mathcal{A}$ such that $F(b) \in P$ and $i(b) \subseteq i(a)$. Hence, $b \leq a$ which implies $F(b) \leq F(a)$. As $P$ is a filter, $F(a) \in P$. Now, to prove that $j$ maps the clopen upsets to the clopens, if $U$ is a clopen upset, there is $a \in \mathcal{A}$ such that $U=i(a)$. Therefore, $j(U)=j(i(a))=\{P \in X \mid i(a) \in N(P)\}=i(F(a))$ which is clopen.\\
For the closure of the clopens under $\Diamond_R$, 
we need an auxiliary implication. First, notice that $\mathcal{A}$ is a locale and $\nabla$ is join preserving. Hence, it has a right adjoint $\Delta$. Define $a \to_{\nabla} b=\Delta(a \Rightarrow b)$, where $\Rightarrow$ is the Heyting implication of $\mathcal{A}$. By Example \ref{ExampleOfImplications}, it is clear that $\to_{\nabla}$ is an implication. Also notice that $c \leq a \to_{\nabla} b$ iff $\nabla c \wedge a \leq b$, for any $a, b, c \in \mathcal{A}$. The important property of $\to_{\nabla}$ for us is that $\Diamond_R(i(a) \cap i(b)^c)=i(a \to_{\nabla} b)^c$, for any $a, b \in \mathcal{A}$. To prove it, we have to show that $a \to_{\nabla} b \in P$ iff for any $Q \in X$ such that $\nabla[P] \subseteq Q$, if $a \in Q$ then $b \in Q$. The forward direction is easy, as $a \to_{\nabla} b \in P$ implies $\nabla(a \to_{\nabla} b) \in Q$. Hence, if $a \in Q$, as $a \wedge \nabla(a \to_{\nabla} b) \leq b$, we reach $b \in P$. For the converse, if $a \to_{\nabla} b \notin P$, then define $G$ and $I$ as the filter generated by $\nabla[P] \cup \{a\}$ and the ideal generated by $b$, respectively. It is clear that $I \cap G=\varnothing$. As otherwise, if $x \in I \cap G$, there are $p_1, \ldots, p_n \in P$ such that $\bigwedge_{i=1}^n \nabla p_i \wedge a \leq x \leq b$. As $\nabla$ is order-preserving, $\nabla(\bigwedge_{i=1}^n p_i) \wedge a \leq b$ which implies $\bigwedge_{i=1}^n p_i \leq a \to_{\nabla} b$. As $p_i$'s are in $P$ and $P$ is a filter, $a \to_{\nabla} b \in P$ which is a contradiction. Hence, $G \cap I=\varnothing$. Now, by the prime filter theorem, there is a prime filter $Q$ such that $G \subseteq Q$ and $Q \cap I=\varnothing$. By the former, we see that $\nabla[P] \subseteq Q$ and $a \in Q$. By the latter, we see $b \notin Q$. \\
Now, having the identity $\Diamond_R(i(a) \cap i(b)^c)=i(a \to_{\nabla} b)^c$, we prove the closure of the clopens of $X$ under $\Diamond_R$. Assume that $U$ is clopen. Then, there are finite sets $\{a_1, \ldots, a_n\}, \{b_1, \ldots, b_n\} \subseteq \mathcal{A}$ such that $U=\bigcup_{r=1}^n i(a_r) \cap i(b_r)^c$. As $\Diamond_R$ preserves the unions, we have $\Diamond_R( U)=\bigcup_{r=1}^n \Diamond_R(i(a_r) \cap i(b_r)^c)$. As $\Diamond_R(i(a_r) \cap i(b_r)^c)=i(a_r \to_{\nabla} b_r)^c$, we conclude that $\Diamond_R(i(a_r) \cap i(b_r)^c)$ and hence $\Diamond_R( U)$ is clopen.\\
The only remaining part is showing $i(a \to_{\mathcal{A}} b)=i(a) \to_{\mathcal{X}} i(b)$, for any $a, b \in \mathcal{A}$. To prove 
$i(a \to_{\mathcal{A}} b) \subseteq i(a) \to_{\mathcal{X}} i(b)$, assume $P \in i(a \to_{\mathcal{A}} b)$ which implies $a \to_{\mathcal{A}} b \in P$. Assume $(P, Q) \in R$ and $i(a) \in N(Q)$, for some $Q \in X$. By definition of $R$, we have $\nabla[P] \subseteq Q$. As we observed above, $i(a) \in N(Q)$ implies $F(a) \in Q$. Hence, we also have $F(a)\in Q$. To prove $F(b) \in Q$, as $a \to_{\mathcal{A}} b \in P$, we have $\nabla(a \to_{\mathcal{A}} b) \in \nabla[P] \subseteq Q$. As $Q$ is a filter and $\nabla(a \to_{\mathcal{A}} b) \wedge F(a) \leq F(b)$, we reach $F(b) \in Q$.
For the converse, assume $a \to_{\mathcal{A}} b \notin P$. We intend to provide a prime filter $Q \in X$ such that $\nabla[P] \subseteq Q$ and $F(a) \in Q$ but $F(b) \notin Q$. Set $I$ and $G$ as the ideal generated by $F(b)$ and the filter generated by $\nabla[P] \cup \{F(a)\}$, respectively. We claim $I \cap G=\varnothing$. Otherwise, if $x \in I \cap G$, there are $p_1, \ldots, p_n \in P$ such that $\bigwedge_{i=1}^n \nabla p_i \wedge F(a) \leq x \leq F(b)$. As, $\nabla$ is order-preserving, $\nabla(\bigwedge_{i=1}^n p_i) \wedge F(a) \leq F(b)$ which implies $\bigwedge_{i=1}^n p_i \leq a \to_{\mathcal{A}} b$. As $p_i$'s are in $P$ and it is a filter, $a \to_{\mathcal{A}} b \in P$ which is a contradiction. Hence, $G \cap I=\varnothing$. Now, by the prime filter theorem, there is a prime filter $Q \in X$ such that $G \subseteq Q$ and $Q \cap I=\varnothing$. By the former, we reach $\nabla[P] \subseteq Q$ and $F(a) \in Q$. By the latter, we prove $F(b) \notin Q$.\\
Finally, note that as all clopen upsets are in the image of $i$, if $\to_{\mathcal{A}}$ is open, for any clopen upsets $U$ and $V$, there are $a, b \in \mathcal{A}$ such that $U=i(a)$ and $V=i(b)$. Therefore, as $a \leq b \to_{\mathcal{A}} a \wedge b$ and $i$ preserves the meet and the implication, we reach $U \subseteq V \to_{\mathcal{X}} U \cap V$. Hence, 
for any $P, Q \in X$ and any clopen upsets $U$ and $V$, if $P \in U$, $(P, Q) 
\in R$ and $V \in N(Q)$, then $U \cap V \in N(Q)$.
\end{proof}

\begin{corollary}
For any strong algebra $(\mathcal{A}, \to_{\mathcal{A}})$, there exist a full KN-frame $\mathcal{K}$ and an embedding $i : (\mathcal{A}, \to_{\mathcal{A}}) \to \mathfrak{A}(\mathcal{K})$.
\end{corollary}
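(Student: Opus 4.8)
The plan is to combine the KN-space representation of Theorem~\ref{RepTheorem} with the passage to full KN-frames described in Remark~\ref{ThePassageToFull}. First I would apply Theorem~\ref{RepTheorem} to the given strong algebra $(\mathcal{A}, \to_{\mathcal{A}})$ to obtain a KN-space $\mathcal{X} = (X, \leq, R, C(X), N)$ together with an embedding $i : (\mathcal{A}, \to_{\mathcal{A}}) \to \mathfrak{A}(\mathcal{X})$ of strong algebras (recall that $i$ preserves all finite meets, all finite joins and the implication, and is injective). Since a KN-space is by definition a KN-frame whose Boolean algebra of subsets is taken to be $C(X)$, the family of clopens of $X$, we may regard $\mathcal{X}$ as an ordinary KN-frame and feed it into the construction of Remark~\ref{ThePassageToFull}.

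Concretely, I would set $\mathcal{X}^f = (X, \leq, R, P(X), N^f)$ with $N^f(x) = \{U \in U(X, \leq) \mid \exists V \in N(x)\ V \subseteq U\}$. By Remark~\ref{ThePassageToFull}, $\mathcal{X}^f$ is a full KN-frame, and since $N(x)$ is already upward closed on the clopen upsets one has $U \in N(x)$ iff $U \in N^f(x)$ for every clopen upset $U$; consequently $\to_{\mathcal{X}}$ and $\to_{\mathcal{X}^f}$ agree on the lattice $U(X, \leq, C(X))$ of clopen upsets. As intersection, union, $X$ and $\varnothing$ are computed identically inside $U(X, \leq, C(X))$ and inside $U(X, \leq)$, the former is a bounded sublattice of the latter, and hence $\mathfrak{A}(\mathcal{X})$ is a strong subalgebra of $\mathfrak{A}(\mathcal{X}^f)$; write $\iota : \mathfrak{A}(\mathcal{X}) \hookrightarrow \mathfrak{A}(\mathcal{X}^f)$ for the resulting strong algebra embedding.

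Finally, I would take $\mathcal{K} := \mathcal{X}^f$ and the composite $\iota \circ i : (\mathcal{A}, \to_{\mathcal{A}}) \to \mathfrak{A}(\mathcal{X}^f)$, which, being a composite of two strong algebra embeddings, is again a strong algebra embedding; this witnesses the claim. There is no real obstacle here beyond unwinding definitions: the single point deserving a line of care is that enlarging the Boolean algebra from $C(X)$ to $P(X)$ does not alter the values of the implication on the old lattice of clopen upsets, which is precisely the content of the ``$U \in N(x)$ iff $U \in N^f(x)$'' observation in Remark~\ref{ThePassageToFull}, together with the (routine) verification that the inclusion of lattices preserves the bounded-lattice structure.
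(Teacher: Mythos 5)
Your proposal is correct and follows exactly the paper's own route: apply Theorem \ref{RepTheorem} to obtain the KN-space and embedding, then pass to the full KN-frame via Remark \ref{ThePassageToFull} and compose the two embeddings. The paper states this as an immediate consequence of those two results, and your write-up simply makes the (routine) details explicit.
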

\begin{proof}
It is an easy consequence of Theorem \ref{RepTheorem} and Remark \ref{ThePassageToFull}.
\end{proof}

\noindent \textbf{Acknowledgments:} The support by the FWF project P 33548 and the Czech Academy of Sciences (RVO 67985840) are gratefully acknowledged.

\bibliographystyle{plainurl}
\bibliography{newbib}

\end{document}